\documentclass[11pt]{amsart}
\usepackage{geometry}                
\geometry{letterpaper,margin=1in}
\usepackage{amssymb,graphics,epsfig,latexsym}
\usepackage[centertags]{amsmath}
\usepackage{amsfonts}
\usepackage{ccaption}
\usepackage{lettrine}
\usepackage[all]{xy}
\usepackage[numbers,sort&compress]{natbib}
\bibpunct{[}{]}{;}{n}{,}{,}

\def\RR{\mathbb{R}}
\newcommand{\dtq}{\dot{q}}
\newcommand{\ext}{{\rm ext}}

\def\ben{\begin{equation}}
\def\een{\end{equation}}
\def\be{\begin{equation*}}
\def\ee{\end{equation*}}
\def\bi{\begin{itemize}}
\def\ei{\end{itemize}}
\def\ba{\begin{array}}
\def\ea{\end{array}}

\newtheorem{theorem}{Theorem}

\newtheorem{lemma}{Lemma}

\begin{document}

\title{Prolongation-Collocation Variational Integrators}
\author{Melvin Leok}
\address{Department of Mathematics, University of California, San Diego.}
\email{mleok@math.ucsd.edu}
\author{Tatiana Shingel}
\address{Department of Mathematics, University of California, San Diego.}
\email{tshingel@math.ucsd.edu}

\begin{abstract}
We introduce a novel technique for constructing higher-order variational integrators for Hamiltonian systems of ODEs.
In particular, we are concerned with generating globally smooth approximations to solutions of a Hamiltonian system.
Our construction of the discrete Lagrangian adopts Hermite interpolation polynomials and the Euler--Maclaurin quadrature formula, and involves applying collocation to the Euler--Lagrange equation and its prolongation.
Considerable attention is devoted to the order analysis of the resulting variational integrators in terms of approximation properties of the Hermite polynomials and quadrature errors. A performance comparison is presented on a selection of these integrators.
\end{abstract}
\date{}
\maketitle

\section{Introduction}

One of the major themes in geometric integration is symplectic methods for solving Hamiltonian systems of ordinary differential
equations. Viewed as maps, such methods preserve the symplectic two-form underlying the dynamical evolution of the system.
Variational integrators are an important class of symplectic integrators, which arise from discretizing Hamilton's principle.
We refer the reader to \cite{marsden&west} for a detailed discussion of the background theory of such methods.
Variational integrators are automatically symplectic and momentum preserving. Moreover, they exhibit good energy
behavior for exponentially long times.

The construction of variational integrators combines the techniques from approximation theory and numerical
quadrature and is related to the Galerkin approach of converting a differential operator equation into a discrete system.
Galerkin methods are semi-analytic in the sense that the discrete solution is described by an element of a
finite-dimensional function space, which provides an analytic expression for the numerical solution. Our goal is to develop
variational integrators that would lead to globally smooth approximations of the solution. To pursue this goal, we adopt
the space of piecewise Hermite polynomials in the Galerkin construction. It is a well-known
result in approximation theory that Hermite polynomials allow for higher-order approximation
of smooth functions at relatively low cost. Recall that Lagrange polynomials are a common choice in the construction of higher-order variational integrators.
However, the computation of an interpolating polynomial in Lagrange form becomes unstable when the degree of the polynomial is high. Unlike Lagrange polynomials, higher-degree Hermite polynomials produce accurate and computationally
stable results.

Furthermore, the Galerkin construction based on piecewise Hermite polynomial interpolation allows us to adequately address the
question of variational order error analysis. The order error analysis of variational integrators relies on determining the order with which a discrete Lagrangian $L_d:Q\times Q\rightarrow \mathbb{R}$ approximates the \textit{exact discrete Lagrangian},
\begin{equation}
\label{exact_classical}
L_d^E(q_0, q_1;h) = \int_0^h L(q_{01}(t), \dot q_{01}(t)) dt,
\end{equation}
where $q_{01}(t)$ is a solution curve of the Euler--Lagrange equation that satisfies the boundary conditions $q_{01}(0)=q_0$, $q_{01}(h)=q_1$. The standard way of performing the error analysis is by comparing the Taylor expansions of the exact discrete Lagrangian and the discrete Lagrangian. Consequently, the result critically depends on the extent to which the discrete trajectory is able to approximate the higher-derivatives of the exact Euler--Lagrange solution curve. To enable a robust variational order error analysis for the Galerkin variational integrators, we propose a novel application of the collocation approach in the setting of discrete Lagrangian mechanics, which involves prolongations of the Euler--Lagrange vector field.
The proposed approach has the advantage that one is able to prove \emph{optimal} rates of convergence of the associated variational integrators as long as sufficiently accurate quadrature formulas are used. The proof crucially depends on the preliminary estimates on the higher-derivative approximation properties of prolongation-collocation curves.

\subsection{Outline of the Paper}
We present a brief review of discrete variational mechanics and variational integrators in Section~\ref{sec:variational_integrators}, and the Euler--Maclaurin quadrature formula in Section~\ref{sec:quadrature}. In Section~\ref{section_method}, we introduce prolongation-collocation variational integrators, and perform variational order error analysis in Section~\ref{section_VOCal}. In Section~\ref{section_examples}, we present a few numerical examples, and present some conclusions and future directions in Section~\ref{sec:conclusions}.

\section{Variational Integrators}\label{sec:variational_integrators}

Let $Q$ be the configuration manifold of a mechanical system, with generalized coordinates $q$.
Consider the \textit{Lagrangian} $L:TQ\rightarrow\RR$, where $TQ$ is the tangent bundle of
the configuration space $Q$.  The tangent bundle has local coordinates $(q,v)$. Further, let $\mathcal{C}(Q)=\mathcal{C}([0,T],Q)$ denote
the space of smooth trajectories $q:[0,T]\rightarrow Q$ in the configuration manifold $Q$.
The \emph{action integral} $S:\mathcal{C}(Q)\rightarrow \RR$ is defined as
\be
S(q)=\int_{0}^{T}L(q(t),\dtq(t))dt.
\ee
The variational principle known as \emph{Hamilton's Principle} states that
$$
\delta S=0, \mbox{ for } \delta q(0)=\delta q(T)=0,
$$
which yields the \emph{Euler--Lagrange equations}
\ben
\label{ELeq}
\frac{\partial L}{\partial q}(q,\dtq)-\frac{d}{dt}\left(\frac{\partial L}{\partial \dtq}(q,\dtq)\right)=0.
\een
It is possible to rewrite the equations \eqref{ELeq} in terms of the generalized coordinates and momenta $(q,p)$ on
the cotangent bundle $T^{*}Q$ (\emph{phase space}). For this, we introduce
the \emph{Legendre transformation} $\mathbb{F}L:TQ\rightarrow T^{*}Q$, defined by
\be
\mathbb{F}L:(q,\dtq)\mapsto \left(q,\frac{\partial L}{\partial \dtq}\right).
\ee
The \emph{Hamiltonian} $H:T^{*}Q\rightarrow\RR$ is given by
\be
H(q,p)=\left.p\cdot\dtq-L(q,\dtq)\right|_{p=\frac{\partial L}{\partial\dot q}}.
\ee
One can show that equations \eqref{ELeq} are equivalent to \emph{Hamilton's equations} (Theorem 1.3, p. 182 in \cite{hairerbook}),
\ben
\label{HAMeq}
\dot{p}=-\frac{\partial H}{\partial q}(p,q),\qquad \dtq=\frac{\partial H}{\partial p}(p,q).
\een
In the case of variational integrators, instead of discretizing the Euler--Lagrange equations \eqref{ELeq}, one
discretizes Hamilton's principle. That is, one discretizes the action by introducing a \emph{discrete Lagrangian} and replaces
the action integral by an \emph{action sum}, and applies the discrete version of the Hamilton's variational principle.
The major advantage of this approach is that the resulting numerical algorithm automatically preserves
the symplectic structure of the underlying dynamical system.

The discrete Lagrangian $L_{d}(q_{0},q_{1},h)$ is thought of as an
approximation of the action integral along the curve segment between the points $q_{0}\approx q(0)$ and $q_{1}\approx q(h)$.
Formally, this can be expressed as
\ben
\label{DL}
L_{d}(q_{0},q_{1},h)\approx\int_{0}^{h}L(q(t),\dtq(t))dt.
\een
We will neglect the $h$-dependence and simply write $L_{d}(q_{0},q_{1})$
when it is not essential for the exposition of the material. Given the discrete sequence of times $\{t_{k}=hk\,|\, k=0,\ldots,N\}$,
$h=T/N$, a discrete curve in $Q$ is denoted by $\{q_{k}\}_{k=0}^{N}$, where $q_k\approx q(t_{k})$.
The discrete action sum is a function that maps the discrete trajectories $\{q_{k}\}_{k=0}^{N}$ to $\RR$, and is given by
$$
S_{d}(\{q_{k}\}_{k=0}^{N})=\sum_{k=0}^{N-1}L_{d}(q_{k},q_{k+1}).
$$
The \emph{discrete Hamilton's principle} requires the discrete action to be stationary with respect to variations vanishing
at $k = 0$ and $k = N$. From this, one derives the discrete version of the Euler--Lagrange equations, which
are known as the \emph{discrete Euler--Lagrange equations},
\ben
\label{DEL_eqn}
D_{2}L_{d}(q_{k-1},q_{k})+D_{1}L_{d}(q_{k},q_{k+1})=0,
\een
where $k=1,2,\ldots,N-1$. These equations implicitly define the one-step \emph{discrete Lagrangian map} $F_{L_{d}}:Q\times Q\rightarrow Q\times Q$.
The discrete Legendre transforms $\mathbb{F}^{\pm}L_{d}:Q\times Q\rightarrow \RR$
are defined by
\be
\begin{split}
&\mathbb{F}^{-}L_{d}:(q_{0},q_{1})\mapsto (q_{0},-D_{1}L_{d}(q_{0},q_{1})),\\&
\mathbb{F}^{+}L_{d}:(q_{0},q_{1})\mapsto (q_{1},D_{2}L_{d}(q_{0},q_{1})).
\end{split}
\ee
Pushing the discrete Lagrangian map $F_{L_{d}}$ forward to $T^{*}Q$ with the discrete Legendre transforms gives the \emph{discrete Hamiltonian map}
$\tilde{F}_{L_{d}}:T^{*}Q\rightarrow T^{*}Q$ by $\tilde{F}_{L_{d}}=\mathbb{F}^{\pm}L_{d}\circ F_{L_{d}}\circ (\mathbb{F}^{\pm}L_{d})^{-1}$.
The fact that the definitions of $\tilde{F}_{L_{d}}$ are equivalent for the $+$ and $-$ case is implied by the following commutative diagram
(Theorem 1.5.2 in \cite{marsden&west})
\ben
\label{diagram1}
\vcenter{
\xymatrix@!0@R=0.75in@C=.75in{ & (q_{0},q_{1})\ar@{|->}[dl]_{\mathbb{F}^{-}L_{d}} \ar@{|->}[dr]^{\mathbb{F}^{+}L_{d}} \ar@{|->}[rr]^{F_{L_d}} & & (q_1,q_2) \ar@{|->}[dl]^{\mathbb{F}^{-}L_{d}} \ar@{|->}[dr]^{\mathbb{F}^{+}L_{d}} &
\\ (q_{0},p_{0}) \ar@{|->}[rr]_{\tilde{F}_{L_d}} & &(q_{1},p_{1}) \ar@{|->}[rr]_{\tilde{F}_{L_d}} &  &(q_2,p_2)
}}
\een
In coordinates, $\tilde{F}_{L_{d}}:(q_{0},p_{0})\mapsto (q_{1},p_{1})$, where
\ben
\label{PM_eq}
p_{0}=-D_{1}L_{d}(q_{0},q_{1}),\qquad   p_{1}=D_{2}L_{d}(q_{0},q_{1}).
\een
A numerical quadrature can be used to approximate the integral in \eqref{DL}. However, the functional form of the solution curve $q(t)$ is
required when applying a quadrature rule and it is, in general, unknown. In practice, one can
choose an interpolating function on the interval $[0,h]$ passing through $q_{0}$, $q_{1}$. Then, a quadrature rule can be applied
to the integral of the Lagrangian evaluated along the interpolating function. This approach fits the general framework
of Galerkin integration methods. In more detail, for the construction of Galerkin Lagrangian variational integrators,
one replaces the path space $\mathcal{C}([0,T],Q)$, which is an infinite-dimensional
function space, with a finite-dimensional function space, $\mathcal{C}^{s}([0,T],Q)$.
Commonly, one uses polynomial approximations to the trajectories, letting
\be
\mathcal{C}^{s}([0,h],Q)=\{q\in \mathcal{C}([0,h],Q)\, |\, q\mbox{ is a polynomial of degree }\leq s\}.
\ee
An approximate action $\mathcal{S}(q):\mathcal{C}^{s}([0,h],Q)\rightarrow\RR$ is
\begin{equation*}
\mathcal{S}(q)=h\sum_{i=1}^{s}b_{i}L(q(c_{i} h),\dot{q}(c_{i}h)),
\end{equation*}
where $c_{i}\in[0,1]$ are quadrature points, $b_{i}$ are quadrature weights, $i=1,\ldots,s$.
The Galerkin discrete Lagrangian is
\begin{equation}
\label{GalDL}
L_{d}(q_{0},q_{1})=\underset{q\in\mathcal{C}^{s}([0,h],Q) }{\mathrm{ext}}\mathcal{S}(q).
\end{equation}
In particular, for higher-order methods one takes $q\in\mathcal{C}^{s}([0,h],Q)$ in the form
\begin{equation*}
q(\tau h;q_{0}^{\nu},h)=\sum_{\nu=0}^{s}q_{0}^{\nu}\tilde{l}_{\nu,s}(\tau),
\end{equation*}
where $q_{0}^{\nu}$, $\nu=1,\ldots,s-1$, are the \emph{internal stages}, $\tilde{l}_{\nu,s}(\tau)$ are the \emph{Lagrange basis polynomials}
 of degree $s$ defined on the interval $[0,1]$. Then, the integration scheme $(q_{0},p_{0})\mapsto (q_{1},p_{1})$ is given by
\begin{equation*}
\begin{aligned}
-p_{0}&=h\sum_{i=1}^{s}b_{i}\left[\frac{\partial L}{\partial q}(c_{i}h)\tilde{l}_{0,s}(c_{i})+\frac{1}{h}\frac{\partial L}{\partial \dot{q}}(c_{i} h)\dot{\tilde{l}}_{0,s}(c_{i})\right],\\
0&=h\sum_{i=1}^{s}b_{i}\left[\frac{\partial L}{\partial q}(c_{i}h)\tilde{l}_{\nu,s}(c_{i})+\frac{1}{h}\frac{\partial L}{\partial \dot{q}}(c_{i} h)\dot{\tilde{l}}_{\nu,s}(c_{i})\right], \nu=\overline{1,s-1}\\
p_{1}& = h\sum_{i=1}^{s}b_{i}\left[\frac{\partial L}{\partial q}(c_{i}h)\tilde{l}_{s,s}(c_{i})+\frac{1}{h}\frac{\partial L}{\partial \dot{q}}(c_{i} h)\dot{\tilde{l}}_{s,s}(c_{i})\right]
\end{aligned}
\end{equation*}
It has been established (see \cite{marsden&west,hairerbook}) that the above scheme is equivalent to a
symplectic partitioned Runge-Kutta method. Note that it yields a discrete solution that is, in general, only piecewise regular. In what follows, we introduce a
construction of higher-order variational integrators with improved regularity across nodal times, which has as its natural variables the position, and its derivatives at only the nodal times, without the use of internal stages.

\section{Quadrature}\label{sec:quadrature}
Here, we present a quadrature formula that we will use in our construction of a high-order discrete Lagrangian. The advantage of this particular rule is that it only involves function evaluations at the endpoints of the interval. When fast adaptive treecodes are used in conjunction with automatic differentiation techniques~\cite{rall1981}, it is more efficient to obtain higher-order approximations using higher-derivative information at the endpoints, rather than evaluating the integrand at a number of internal stages.

\begin{theorem}\label{EMF} \textbf{(Euler--Maclaurin quadrature formula)}\cite{abramowitz}
If $f$ is sufficiently differentiable on $(a,b)$, then for any $m>0$
\begin{multline*}
\int_{a}^{b}f(x)dx = \frac{\theta}{2}\left[f(a)+2\sum_{k=1}^{N-1}f(a+k\theta)+f(b)\right]\\
-\sum_{l=1}^{m}\frac{B_{2l}}{(2l)!}\theta^{2l}\left(f^{(2l-1)}(b)-f^{(2l-1)}(a)\right)
-\frac{B_{2m+2}}{(2m+2)!}N \theta^{2m+3}f^{(2m+2)}(\xi)
\end{multline*}
where $B_k$ are the Bernoulli numbers, $\theta=(b-a)/N$ and $\xi\in(a,b)$.
\end{theorem}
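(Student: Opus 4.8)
The plan is to reduce the formula to a single-panel identity on $[0,1]$ and then sum over the $N$ subintervals. Put $\theta=(b-a)/N$ and $c_j=a+(j-1)\theta$. The substitution $x=c_j+\theta t$ turns $\int_{c_j}^{c_j+\theta}f\,dx$ into $\theta\int_0^1 g_j(t)\,dt$ with $g_j(t)=f(c_j+\theta t)$, so that $g_j^{(k)}(t)=\theta^k f^{(k)}(c_j+\theta t)$. It therefore suffices to establish, for a generic $g\in C^{2m+2}([0,1])$, the one-panel identity
\be
\int_0^1 g(t)\,dt=\frac{g(0)+g(1)}{2}-\sum_{l=1}^m\frac{B_{2l}}{(2l)!}\bigl(g^{(2l-1)}(1)-g^{(2l-1)}(0)\bigr)+r(g)
\ee
with a sign-definite representation of the remainder $r(g)$, and then to reassemble.

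I would prove the one-panel identity by repeated integration by parts against the Bernoulli polynomials $B_n(t)$, using $B_0\equiv 1$, $B_n'(t)=nB_{n-1}(t)$, $B_1(t)=t-\tfrac12$, the endpoint values $B_n(0)=B_n(1)=B_n$ for $n\ge 2$, and the vanishing of the odd Bernoulli numbers $B_{2l+1}=0$, $l\ge 1$. Beginning from $\int_0^1 g\,dt=\int_0^1 B_0(t)g(t)\,dt$ and integrating by parts via $B_1'=B_0$ produces the trapezoidal term $\tfrac12(g(0)+g(1))$; each subsequent pair of integrations by parts, substituting $B_k(t)=B_{k+1}'(t)/(k+1)$, peels off one correction term $-\frac{B_{2l}}{(2l)!}(g^{(2l-1)}(1)-g^{(2l-1)}(0))$, the odd-index boundary contributions dropping out because $B_{2l+1}=0$. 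After $2m+2$ such steps the outcome carries $m+1$ correction terms and a remainder $\frac{1}{(2m+2)!}\int_0^1 B_{2m+2}(t)g^{(2m+2)}(t)\,dt$; rewriting $B_{2m+2}(t)$ as $(B_{2m+2}(t)-B_{2m+2})+B_{2m+2}$ inside this last integral generates a compensating boundary term that exactly cancels the $(m+1)$-st correction term, leaving only $m$ of them together with
\be
r(g)=\frac{1}{(2m+2)!}\int_0^1\bigl(B_{2m+2}(t)-B_{2m+2}\bigr)g^{(2m+2)}(t)\,dt.
\ee

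Next I would bring $r(g)$ into single-evaluation form. The decisive analytic ingredient is that the weight $B_{2m+2}(t)-B_{2m+2}$ does not change sign on $[0,1]$ (it vanishes only at the endpoints); this follows by induction on $m$ from $B_n'=nB_{n-1}$ and Rolle's theorem, and is in any case a classical property of Bernoulli polynomials. Granting it, the mean value theorem for integrals gives $r(g)=\frac{g^{(2m+2)}(\eta)}{(2m+2)!}\int_0^1(B_{2m+2}(t)-B_{2m+2})\,dt=-\frac{B_{2m+2}}{(2m+2)!}g^{(2m+2)}(\eta)$ for some $\eta\in(0,1)$, using $\int_0^1 B_{2m+2}(t)\,dt=0$. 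Rescaling back to $[c_j,c_j+\theta]$, so that $g_j^{(2m+2)}=\theta^{2m+2}f^{(2m+2)}$ and the outer factor $\theta$ from $dx=\theta\,dt$ is restored, yields the per-panel formula
\be
\int_{c_j}^{c_j+\theta}f\,dx=\frac{\theta}{2}\bigl(f(c_j)+f(c_j+\theta)\bigr)-\sum_{l=1}^m\frac{B_{2l}}{(2l)!}\theta^{2l}\bigl(f^{(2l-1)}(c_j+\theta)-f^{(2l-1)}(c_j)\bigr)-\frac{B_{2m+2}}{(2m+2)!}\theta^{2m+3}f^{(2m+2)}(\xi_j)
\ee
with $\xi_j\in(c_j,c_j+\theta)$.

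Finally I would sum over $j=1,\dots,N$: the half-weights at the shared interior nodes combine into the full weights $2f(a+k\theta)$; the correction sums telescope, since $\theta^{2l}f^{(2l-1)}$ evaluated at the right end of panel $j$ cancels the same quantity at the left end of panel $j+1$, leaving only the values at $a$ and $b$; and, since $f^{(2m+2)}$ is continuous, the intermediate value theorem collapses $\sum_{j=1}^N f^{(2m+2)}(\xi_j)$ into $N f^{(2m+2)}(\xi)$ for a single $\xi\in(a,b)$. Assembling these three pieces gives the stated identity. The main obstacle is the sign-definiteness of $B_{2m+2}(t)-B_{2m+2}$ on $[0,1]$, since both the mean value step and the resulting clean single-point remainder rest on it; the integration-by-parts accounting and the telescoping are otherwise routine.
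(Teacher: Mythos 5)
Your proof is correct, but note that the paper itself does not prove this statement: Theorem~\ref{EMF} is quoted as a classical result with a citation to \cite{abramowitz}, so there is no in-paper argument to compare against. What you have written is the standard textbook derivation of Euler--Maclaurin: the reduction to a single panel by affine rescaling, repeated integration by parts against the Bernoulli polynomials using $B_n'=nB_{n-1}$, $B_n(0)=B_n(1)=B_n$ for $n\ge 2$ and $B_{2l+1}=0$, the splitting $B_{2m+2}(t)=(B_{2m+2}(t)-B_{2m+2})+B_{2m+2}$ to cancel the $(m+1)$-st correction term, the mean value theorem for integrals applied to the sign-definite kernel $B_{2m+2}(t)-B_{2m+2}$ together with $\int_0^1 B_{2m+2}(t)\,dt=0$, and finally the telescoping and averaging over the $N$ panels. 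I checked the signs and constants: the one-panel remainder $r(g)=-\frac{B_{2m+2}}{(2m+2)!}g^{(2m+2)}(\eta)$ and the rescaled per-panel error $-\frac{B_{2m+2}}{(2m+2)!}\theta^{2m+3}f^{(2m+2)}(\xi_j)$ are right, and the intermediate value theorem step assembling $\sum_j f^{(2m+2)}(\xi_j)=Nf^{(2m+2)}(\xi)$ is legitimate since $f^{(2m+2)}$ is continuous. The only ingredient you state rather than prove in full is the constant sign of $B_{2m+2}(t)-B_{2m+2}$ on $(0,1)$; you correctly identify it as the crux, and it is indeed a classical property of Bernoulli polynomials (provable by the Rolle-type induction you sketch), so this is an acceptable level of detail for a result the paper itself imports from the literature.
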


Let us apply Theorem~\ref{EMF} to approximate an integral $\int_{0}^{h}f(x)dx$ in the simplest case when $N=1$.
It is easy to see that we obtain the following quadrature rule
\ben
\label{EMQ}
K(f)=\frac{h}{2}\left[f(0)+f(h)\right]-\sum_{l=1}^{m}\frac{B_{2l}}{(2l)!}h^{2l}\left(f^{(2l-1)}(h)-f^{(2l-1)}(0)\right),
\een
and the error of approximation is $\mathcal{O}(h^{2m+3})$.

\section{The Prolongation-Collocation Method}\label{section_method}
In this section, we explain the construction of the discrete Lagrangian based on Hermite interpolation and the Euler--Maclaurin
quadrature formula.
\subsection*{Motivation for Prolongation-Collocation Approach}
The variational characterization of the exact discrete Lagrangian \eqref{exact_classical} naturally leads to the variational Galerkin discrete Lagrangian \eqref{GalDL}, where the infinite-dimensional function space $\mathcal{C}([0,h],Q)$ is replaced by a finite-dimensional subspace, and the integral is approximated by a quadrature formula. While this leads to a computable discrete Lagrangian, one does not necessarily obtain an optimally accurate discrete Lagrangian whose variational order is related to the best approximation properties of the chosen finite-dimensional function space. In particular, one finds that the variational Galerkin extremal curves do not necessarily approximate the higher-derivatives of the Euler--Lagrange solution curves with adequate accuracy.

In retrospect, the fact that the variational Galerkin approach does not readily lead to computable discrete Lagrangians with provable approximation properties is not too surprising.  By construction, variational Galerkin discrete Lagrangians associated with a sequence of finite-dimensional function spaces involve extremizers of a sequence of functionals. Since the sequence of finite-dimensional function spaces converges to $\mathcal{C}([0,h],Q)$, the sequence of functionals converges to the functional that appears in the variational characterization of the exact discrete Lagrangian. However, it is unclear that the sequence of extremizers converges to the extremizer of the limiting functional, since that corresponds to $\Gamma$-convergence~\cite{Da1993} of the sequence of functionals. The issue of optimal rates of convergence of the computable discrete Lagrangians involves establishing rates of convergence of extremizers in terms of approximation rates of the finite-dimensional function spaces, which is an even more complicated process.

As an alternative, we adopt the characterization of the exact discrete Lagrangian in terms of the Euler--Lagrange solution curve, and construct  a discrete curve which approximates higher-derivatives of the Euler--Lagrange solution curve to an adequate level of accuracy. The latter is explored in detail in Section~\ref{section_VOCal}.

\subsection*{Hermite Interpolation and Prolongation-Collocation}
We commence by replacing $q(t)$ in \eqref{DL} by its Hermite interpolant which is obtained by
constructing a polynomial $q_{d}(t)$  such that values of $q(t)$ and any number of its derivatives at given points are fitted by
the corresponding function values and derivatives of $q_{d}(t)$. In this paper we are concerned with fitting
function values of $q(t)$ and its derivatives at the end-points of the interval $[0,h]$.
Consequently, a so-called two-point Hermite interpolant $q_{d}(t)$ of degree $d=2n-1$ can be used, which has the form
\begin{align}
\label{HermitePol}
q_{d}(t)&=\sum_{j=0}^{n-1}\left(q^{(j)}(0)H_{n,j}(t)+(-1)^{j}q^{(j)}(h)H_{n,j}(h-t)\right),
\intertext{where}\label{basis_pol}
H_{n,j}(t)&=\frac{t^{j}}{j!}(1-t/h)^{n}\sum_{s=0}^{n-j-1}\left( \ba{c} n+s-1 \\ s \ea \right)(t/h)^{s}
\end{align}
are the Hermite basis functions. Note that for $n=1$, the interpolant is a straight line joining $q(0)$ and $q(h)$.
By choosing one of the simple quadrature rules to discretize the integral in \eqref{DL} (e.g., the midpoint rule or trapezoidal rule),
one obtains a class of well-known integrators which are at most second-order (see \cite{marsden&west}).
Therefore, the first nontrivial case of interest is $n=2$, where we assume that the position and velocity data
at the end points are available. From now on, we only consider $n\geq 2$ when applying the Hermite interpolation formula.
The detailed derivation of \eqref{HermitePol} can be found, for example, in \cite{davis}. By construction,
$$
q_{d}^{(r)}(0)=q^{(r)}(0), \qquad q_{d}^{(r)}(h)=q^{(r)}(h), \qquad r=0,1,\ldots n-1.
$$
Except for the step-size $h$, the discrete Lagrangian $L_{d}(q_0,q_1,h)$ should only depend on $q_{0}\approx q(0)$, $q_{1}\approx q(h)$.
Therefore, letting $q_{d}(0)=q_{0}$ and $q_{d}(h)=q_1$, we need to approximate the higher-order derivatives of $q(t)$ by
expressions that only depend on $q_{0},q_{1}$. One natural approach, which is often found in the literature,
is to use finite differences. In this work, we propose to apply
the idea of \emph{collocation} in conjunction with the Euler--Lagrange equations \eqref{ELeq}.
The benefits of this approach will be exemplified later when discussing the variational error analysis of the
proposed class of numerical integrators (see Section~\ref{section_VOCal}).

The collocation approach~\cite{HaNoWa1993} is well-known
in the theory of initial and boundary value problems for ODEs \cite{costabile,iserlesbook}.
Roughly speaking, the technique consists of determining the unknown parameters of a parameterized curve by requiring
$q_{d}(t)$ to satisfy the ODE at a given set of points (collocation points).
To define $q_{d}(t)$ uniquely, one sets the number of collocation points to be equal to the number of the available degrees of freedom.
In our approach we use the method of collocation in a slightly unusual manner.
In particular, since the parameters in \eqref{HermitePol} correspond to the derivatives of the solution curve $q(t)$
at the end points of the interval $[0,h]$, we are going to use $t=0$ and $t=h$ as collocation points for the Euler--Lagrange
equations \eqref{ELeq} and consider the \emph{prolongation} \cite{Ol1993} of the Euler--Lagrange equations in order to generate a sufficient number of conditions. In other words, we increase the number
of equations under consideration (not the number of collocation points) to match the number of degrees of freedom.

For example, consider the case of the quintic Hermite interpolation, i.e., set $n=3$ in \eqref{HermitePol}.
For separable Lagrangians of the form
$L(q,\dtq)=\frac{1}{2}m\dtq^2-V(q)$, where $m$ is the mass and $V(q)$ is the potential energy term, the Euler--Lagrange
equations \eqref{ELeq} become a second-order ODE of the form
\begin{align*}
\ddot{q}(t)&=f(q(t)),
\intertext{and its first-order prolongation can be expressed as}
q^{(3)}(t)&=f'(q(t))\dot{q}(t).
\end{align*}
We set the boundary conditions $q_{d}(0)=q_{0}$ and $q_{d}(h)=q_{1}$ and the collocation conditions
\begin{alignat*}{2}
\ddot{q}_{d}(0)&=f(q_{d}(0)),&\qquad q^{(3)}_{d}(0)&=f'(q_{d}(0))\dot{q}_{d}(0),\\
\ddot{q}_{d}(h)&=f(q_{d}(h)),&\qquad q^{(3)}_{d}(h)&=f'(q_{d}(h))\dot{q}_{d}(h).
\end{alignat*}
The above conditions constitute the system of six equations, which uniquely determines the fifth-degree polynomial $q_{d}(t)$ in the form of \eqref{HermitePol}.

In general, for the Hermite polynomial of degree $2n-1$, one would need to differentiate the Euler--Lagrange equation $n-2$ times, thus
deriving a system of $n-1$ equations for $\ddot{q}_{d}(t), q^{(3)}_{d}(t), \ldots, q^{(n)}_{d}(t)$.
Evaluated at $0$ and $h$, these (together with the Euler--Lagrange equations) give $2n-2$ collocation equations, which together with boundary conditions ($q_d(0)=q_0, q_d(h)=q_1$)
constitute a sufficient number of conditions to determine the interpolant $q_{d}(t)$ uniquely. Note that for
large $n$, the system of collocation conditions becomes nonlinear.
Since the second and higher-order derivatives $q_{d}^{(j)}(0), q_{d}^{(j)}(h)$ are given explicitly, the system can be
recursively reduced to two implicit equations involving $\dtq_{d}(0), \dtq_{d}(h)$.
In this case, one would need to make use of a nonlinear root solver, such as the Newton--Raphson method, to determine $\dtq_{d}(0), \dtq_{d}(h)$.

Further, in order to discretize the integral in \eqref{DL}, we apply the Euler--Maclaurin quadrature formula \eqref{EMQ}.
Recall that the formula involves derivatives of the integrand, in our case the Lagrangian $L$, with
respect to the independent variable evaluated at the end-points of the
integration interval. The latter, however, does not require the extensive computations typically associated with the interpolating polynomial
due to the use of the Hermite interpolation formula and
the collocation idea explained above. In more detail, we write
\begin{multline*}
\int_{0}^{h}L(q_{d}(t),\dtq_{d}(t))dt\approx\frac{h}{2}(L(q_{d}(0),\dtq_{d}(0))+L(q_{d}(h),\dtq_{d}(h)))\\
-\sum_{l=1}^{m}\frac{B_{2l}}{(2l)!}h^{2l}\left(\frac{d^{2l-1}}{dt^{2l-1}}L(q_{d}(t),\dtq_{d}(t))\bigg|_{t=h}-
\frac{d^{2l-1}}{dt^{2l-1}}L(q_{d}(t),\dtq_{d}(t))\bigg|_{t=0}\right).
\end{multline*}
Provided that the degree of the interpolating polynomial is $2n-1$, we choose $m\leq \lfloor n/2\rfloor$, where the brackets denote
the greatest integer lower bound for $n/2$. So for even $n$, $\lfloor n/2\rfloor=n/2$ and for odd $n$, $\lfloor n/2\rfloor=(n-1)/2$.
This choice of $m$ is justified by observing that the expressions for
$$
\frac{d^{2l-1}}{dt^{2l-1}}L(q_{d}(t),\dtq_{d}(t))\bigg|_{t=\tau},\quad l=1,2,\ldots,m,\quad \tau=0,h,
$$
include the derivatives of $q_{d}(t)$ up to order $2\lfloor n/2\rfloor-1+1\leq n$, which satisfy the
corresponding collocation conditions.

\subsection*{Prolongation-Collocation Discrete Lagrangian}

The Prolongation-Collocation discrete Lagrangian is defined as follows,
\begin{multline}
\label{DLEM}
L_{d}(q_{0},q_{1},h)=\frac{h}{2}(L(q_{d}(0),\dtq_{d}(0))+L(q_{d}(h),\dtq_{d}(h)))\\
-\sum_{l=1}^{\lfloor n/2\rfloor}\frac{B_{2l}}{(2l)!}h^{2l}\left(\frac{d^{2l-1}}{dt^{2l-1}}L(q_{d}(t),\dtq_{d}(t))\bigg|_{t=h}-
\frac{d^{2l-1}}{dt^{2l-1}}L(q_{d}(t),\dtq_{d}(t))\bigg|_{t=0}\right),
\end{multline}
where $q_d(t)\in \mathcal{C}^s(Q)$ is determined by the boundary and prolongation-collocation conditions,
\begin{alignat}{2}
\label{DLsys}
 q_{d}(0) &=q_{0} &\qquad q_{d}(h)&=q_{1},\notag \\
\ddot{q}_{d}(0)&=f(q_{0})&\qquad \ddot{q}_{d}(h)&=f(q_{1}),\notag\\
q_{d}^{(3)}(0)&=f'(q_{0})\dtq_{d}(0)&\qquad q_{d}^{(3)}(h)&=f'(q_{1})\dtq_{d}(h),\\
&\hspace{1.25ex}\vdots &\qquad & \hspace{1.25ex}\vdots\notag\\
q^{(n)}_{d}(0)& =\frac{d^{n}}{dt^{n}}f(q_{d}(t))\bigg|_{t=0} &\qquad q^{(n)}_{d}(h)& =\frac{d^{n}}{dt^{n}}f(q_{d}(t))\bigg|_{t=h}\notag
\end{alignat}
One can include fewer than $\lfloor n/2\rfloor$ terms in the summation in \eqref{DLEM}. However, this will have an impact on the variational
order of the corresponding integrator as will be further discussed in Section~\ref{section_VOCal}. The system of equations \eqref{DLsys} completely
defines the discrete Lagrangian \eqref{DLEM}. Note that we used the second-order ODE $\ddot{q}(t)=f(q(t))$ as a prototype
of the Euler--Lagrange equations for simplicity of notation only. The same idea applies for any smooth, not necessarily separable, Lagrangian
function and the corresponding Euler--Lagrange equations.

Given the initial conditions $(q_{0},p_{0})$, the variational integrator has the form
\ben
\label{one-stepVI}
\begin{aligned}
p_{k} &=-D_{1}L_{d}(q_{k},q_{k+1}),\\
p_{k+1} &=D_{2}L_{d}(q_{k},q_{k+1}), \qquad k=0,1,\ldots,
\end{aligned}
\een
and defines a one-step map $(q_{k},p_{k})\mapsto(q_{k+1},p_{k+1})$. Generally, the equation $p_{k}=-D_{1}L_{d}(q_{k},q_{k+1})$ together
with the system of collocation conditions \eqref{DLsys} can be reduced to a system of implicit equations with respect to $q_{k+1}, \dtq_{d}(t_{k}),
\dtq_{d}(t_{k+1})$. As soon as the solution is obtained via some appropriate nonlinear root-finding method, it is inserted into the equation
$p_{k+1}=D_{2}L_{d}(q_{k},q_{k+1})$.

When the Lagrangian has a relatively simple form, it makes sense to compute the expression for the discrete Lagrangian
\eqref{DLEM} symbolically, which can be done using the symbolic module in \textsc{Matlab} or symbolic software such as \textsc{Mathematica} or \textsc{Maple}.
Having computed \emph{a priori} closed-form expressions for the right-hand side in \eqref{one-stepVI}, makes the implementation of the integrator particularly simple and fast. The reader is referred to Section~\ref{section_examples} for some numerical examples.

\section{Variational Order Calculation}\label{section_VOCal}
The construction of variational integrators in the Galerkin framework naturally leads to the question of how
it can be reconciled with the results from approximation theory of function spaces and
numerical analysis of quadrature schemes. In particular, our goal is to explore the way the quantitative characteristics of the
approximation errors enter the calculation of the convergence order of the respective integrators.
Variational error analysis provides the right framework to pursue this goal.

The variational error analysis introduced in \cite{marsden&west}, and refined in \cite{PaCu2009}, is based on the idea that rather than considering how closely the numerical trajectory matches the exact flow,
one can consider how the discrete Lagrangian approximates the exact discrete Lagrangian \eqref{exact_classical}
which generates the exact flow map of the Euler--Lagrange equations. In other words, we are looking at the approximation error in
\be
L_{d}(q(0),q(h))\approx\underset{\underset{q(0)=q_{0},\,q(h)=q_{1}}{q\in \mathcal{C}([0,h],Q)}}{\ext}\int_{0}^{h}L(q(t),\dtq(t))dt=L_{d}^{E}(q(0),q(h),h).
\ee
We say that a given discrete Lagrangian is of \emph{order $r$} if there exist an open subset $U_{v}\subset TQ$ with
compact closure and constants $C_{v}$ and $h_{v}>0$ so that
\ben
\label{order_def}
\|L_{d}(q(0),q(h),h)-L_{d}^{E}(q(0),q(h),h)\|\leq C_{v}h^{r+1}
\een
for all solutions $q(t)$ of the Euler--Lagrange equations with initial condition $(q(0),\dtq(0))\in U_{v}$ and for all $h\leq h_{v}$.
In \cite{marsden&west}, the authors prove the equivalence of \eqref{order_def} (cf. Theorem~2.3.1 in \cite{marsden&west}) to:
\begin{enumerate}
\renewcommand{\labelenumi}{(\roman{enumi})}
 \item the discrete Hamiltonian map $\tilde{F}_{L_{d}}$ being of order $r$;
 \item  the discrete Legendre transforms $\mathbb{F}^{\pm}L_{d}$ being of order $r$.
\end{enumerate}
In particular, the discrete Hamiltonian map is of order $r$ if
\ben
\label{DisHamil_order}
\|\tilde{F}_{L_{d}}(q(0),p(0),h)-\tilde{F}_{L_{d}^{E}}(q(0),p(0),h)\|\leq \tilde{C}_{v}h^{r+1},
\een
for all solutions $(q(t), p(t))$ of the Hamilton's equations with initial condition $(q(0),p(0))\in U_{w}\subset T^{*}Q$ and for all $h\leq h_{w}$.
The order of the discrete Legendre transforms is defined analogously. Recall from the diagram \eqref{diagram1} that
$$
\tilde{F}_{L_{d}}: (q_{0},p_{0})\mapsto (q_{1},p_{1}).
$$
By construction, $\tilde{F}_{L_{d}^{E}}(q(0),q(h),h)$
produces the values $(q(h),p(h))$ corresponding to the exact solution of the Hamiltonian system \eqref{HAMeq}, whereas $\tilde{F}_{L_{d}}(q(0),q(h),h)$ produces the approximate values $(q_{1},p_{1})$, $q_{1}\approx q(h)$, $p_{1}\approx p(h)$.
To summarize, the estimate \eqref{DisHamil_order} provides the local order of convergence of the discrete trajectory $(q_{k},p_{k})$
to the exact flow $(q(t),p(t))$ of the Hamiltonian vector field. This order is the same as the order to which the discrete Lagrangian approximates
the exact discrete Lagrangian, which we focus on.

Before we explore the inequality \eqref{order_def} for the Prolongation-Collocation discrete Lagrangian
discussed in Section~\ref{section_method}, we would like to establish the approximation error
of $q^{(j)}_{d}(t)$ in comparison to $q^{(j)}(t)$ for $j=1,2\ldots,n$, where $q(t)$ is the exact solution of
the Euler--Lagrange equation, and
$q_{d}(t)$ is the Hermite interpolating polynomial \eqref{HermitePol} of degree $2n-1$, constructed by letting $q_{d}(0)=q(0)$ and $q_{d}(h)=q(h)$, and
imposing the prolongation-collocation conditions discussed in Section~\ref{section_method} at the endpoints.
Note that this can be a difficult task in general, since the complexity of the collocation procedure escalates with
the degree of the Hermite polynomial. However, we are only interested in the approximation order at the end-points of the interval $[0,h]$,
in which case the analysis is straightforward.
\begin{lemma}\label{lem:HOD}
For $q(t)$, $q_{d}(t)$ as above, if $\dtq_{d}(\tau)=\dtq(\tau)+\mathcal{O}(h^p)$
for some $p>0$ and $\tau=0,h$, then
$$
q_{d}^{(j)}(\tau)=q^{(j)}(\tau)+\mathcal{O}(h^p),\quad j=3,\ldots,n.
$$
\end{lemma}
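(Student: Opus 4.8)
The plan is to exploit the defining feature of the prolongation-collocation construction: at the nodes $\tau=0,h$ the interpolant $q_{d}(t)$ is forced to satisfy the \emph{same} differential relations that the exact Euler--Lagrange solution $q(t)$ satisfies identically in $t$. Using the prototype $\ddot q=f(q)$ for concreteness, differentiate repeatedly and substitute the equation back in: every derivative of an exact solution is a fixed smooth function of $(q,\dtq)$,
\begin{equation*}
q^{(j)}(t)=G_{j}\big(q(t),\dtq(t)\big),\qquad j=2,3,\ldots,n,
\end{equation*}
where $G_{2}(a,b)=f(a)$, $G_{3}(a,b)=f'(a)b$, and in general $G_{j}$ is built polynomially from $f,f',\ldots,f^{(j-2)}$, hence is $C^{\infty}$ wherever $f$ is. The same identities hold with $q$ replaced by $q_{d}$ \emph{when evaluated at} $\tau=0,h$: the conditions \eqref{DLsys} give $\ddot q_{d}(\tau)=f(q_{d}(\tau))=G_{2}(q_{d}(\tau),\dtq_{d}(\tau))$ and $q_{d}^{(3)}(\tau)=f'(q_{d}(\tau))\dtq_{d}(\tau)=G_{3}(q_{d}(\tau),\dtq_{d}(\tau))$, and, substituting the lower-order conditions of \eqref{DLsys} into $q_{d}^{(j)}(\tau)=\frac{d^{j-2}}{dt^{j-2}}f(q_{d}(t))\big|_{t=\tau}$, one reproduces exactly the same reduction, so
\begin{equation*}
q_{d}^{(j)}(\tau)=G_{j}\big(q_{d}(\tau),\dtq_{d}(\tau)\big),\qquad \tau=0,h,\quad j=2,\ldots,n.
\end{equation*}

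With this in hand the estimate is immediate. The boundary conditions in \eqref{DLsys} give $q_{d}(\tau)=q(\tau)$ exactly at $\tau=0,h$, and the hypothesis is $\dtq_{d}(\tau)=\dtq(\tau)+\mathcal{O}(h^{p})$; therefore
\begin{equation*}
q_{d}^{(j)}(\tau)-q^{(j)}(\tau)=G_{j}\big(q(\tau),\dtq_{d}(\tau)\big)-G_{j}\big(q(\tau),\dtq(\tau)\big)=\mathcal{O}(h^{p})
\end{equation*}
for $j=3,\ldots,n$, by Lipschitz continuity of $G_{j}$ (the case $j=2$ being in fact exact, and $j=1$ the hypothesis itself). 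Equivalently, one may run a short induction on $j$: assuming $q_{d}^{(i)}(\tau)=q^{(i)}(\tau)+\mathcal{O}(h^{p})$ for all $i\leq j-1$, a Lipschitz bound of the form $|q_{d}^{(j)}(\tau)-q^{(j)}(\tau)|\leq C\sum_{i\le j-1}|q_{d}^{(i)}(\tau)-q^{(i)}(\tau)|$ propagates the estimate, since the number of terms is fixed ($j\leq n$) and each is $\mathcal{O}(h^{p})$ or zero.

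The only point requiring a little care — and the nearest thing to an obstacle — is the uniformity of the Lipschitz constant $C$ in $h$. Since $(q(0),\dtq(0))$ ranges over the set $U_{v}$ of \eqref{order_def}, which has compact closure, and $h\leq h_{v}$, the exact solution and its first $n$ derivatives remain in a fixed compact set $\mathcal{K}$; by the estimates just derived (or, in the inductive formulation, by the inductive hypothesis), the arguments $(q_{d}(\tau),\dtq_{d}(\tau),\ldots,q_{d}^{(j-1)}(\tau))$ then lie in a fixed compact neighbourhood of $\mathcal{K}$ once $h$ is small, on which each $G_{j}$ has a finite Lipschitz constant. Finally, although we used $\ddot q=f(q)$ above, for any regular Lagrangian the Euler--Lagrange equation \eqref{ELeq} may be written in normal form $\ddot q=g(q,\dtq)$, its prolongations again express $q^{(j)}$ as a smooth function $G_{j}(q,\dtq)$, and the collocation conditions enforce the identical relations on $q_{d}$ at the nodes, so the argument applies without change.
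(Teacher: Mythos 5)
Your proposal is correct and follows essentially the same route as the paper: both express $q^{(j)}(\tau)$ and $q_{d}^{(j)}(\tau)$ through the prolonged equation as the same smooth function of $(q(\tau),\dtq(\tau))$ resp.\ $(q_{d}(\tau),\dtq_{d}(\tau))$, use the exact endpoint match $q_{d}(\tau)=q(\tau)$ together with the hypothesis on $\dtq_{d}(\tau)$, and conclude via boundedness/Lipschitz continuity of $f$ and its derivatives. Your added remarks on uniformity of the Lipschitz constants over a compact set and on the non-separable case are minor refinements of the same argument, not a different method.
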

\begin{proof}
Note that $\ddot{q}(\tau)$ coincides with $\ddot{q}_{d}(\tau)$ by construction. Therefore,
we consider $j$ starting from $3$. As before, we restrict the proof to the case of a separable Lagrangian.
Indeed, since the Euler--Lagrange equation is equivalent to the second-order ODE
\ben
\label{newton}
\ddot{q}(t)=f(q(t)),
\een
it follows that for $\tau=0,h,$
\be
\begin{split}
q_{d}^{(3)}(\tau)-q^{(3)}(\tau)&=f'(q_{d}(\tau))\dtq_{d}(\tau)-f'(q(\tau))\dtq(\tau)\\&
=f'(q(\tau))\dtq_{d}(\tau)-f'(q(\tau))\dtq(\tau)=f'(q(\tau))(\dtq_{d}(\tau)-\dtq(\tau))=\mathcal{O}(h^p),
\end{split}
\ee
provided there exists a uniform bound for $f'$.
Consecutively differentiating \eqref{newton} and substituting the corresponding expressions for lower order
derivatives, one can see that $q^{(j)}(\tau)$ (and $q_{d}^{(j)}(\tau)$) can be represented as a polynomial in powers of
$\dtq(\tau)$ (resp. $\dtq_{d}(\tau)$) with coefficients which only depend on $q(\tau)=q_{d}(\tau)$,
\be
\begin{split}
q^{(4)}(\tau)&=f''(q(\tau))\dtq(\tau)^2+f'(q(\tau))f(q(\tau))\\
q^{(5)}(\tau)&=f^{(3)}(q(\tau))\dtq(\tau)^3+\dtq(\tau)(3f''(q(\tau))f(q(\tau))+f'(q(\tau))^2)\\
&\hspace{1.25ex}\vdots
\end{split}
\ee
As soon as $f$ has bounded higher-order derivatives, the above formulas imply that
the order of the approximation of $q^{(j)}(\tau)$ by $q_{d}^{(j)}(\tau)$ as a function of $h$ is
equal to the order of the approximation of  $\dtq(\tau)$ by $\dtq_{d}(\tau)$. We consider these expressions up to the $j=n$ case, where $n$ is determined
by the number of collocation equations that were used to compute $q_{d}$.
\end{proof}

In the next lemma, we determine the value of $p$ in the relation $\dtq_{d}(\tau)=\dtq(\tau)+\mathcal{O}(h^p)$
in terms of the degree of the polynomial $q_{d}(t)$.

\begin{lemma}\label{lem:FOD}
Consider a polynomial $q_{d}(t)$ of degree $d=2n-1$ given by the formula
$$
q_{d}(t)=\sum_{j=0}^{n-1}\left(a_{0j}H_{n,j}(t)+(-1)^{j}a_{1j}H_{n,j}(h-t)\right),
$$
where $H_{n,j}(t)$ are the basis polynomial functions \eqref{basis_pol}. By construction,
$$
q_{d}^{(j)}(0)=a_{0j}, \quad q_{d}^{(j)}(h)=a_{1j}, \quad j=0,\ldots n-1.
$$
We let $a_{00}=q(0)$, $a_{10}=q(h)$. The coefficients $a_{0j}$, $a_{1j}$, $j=1,2,\ldots, n-1$ are obtained from
the system of equations consisting of the Euler--Lagrange equation \eqref{ELeq} and its
prolongations. In particular, these are $n-1$ differential equations evaluated on $q_{d}(t)$ at $t=0$ and $t=h$.
Then for $\tau=0,h,$
\be
\dtq_{d}(\tau)=\dtq(\tau)+\mathcal{O}(h^{2n-1}).
\ee
\end{lemma}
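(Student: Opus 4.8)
The plan is to compare the prolongation--collocation polynomial $q_{d}$ with the Hermite interpolant $\hat q_{d}$ of the \emph{exact} solution data, i.e.\ the polynomial of degree $2n-1$ determined by $\hat q_{d}^{(j)}(0)=q^{(j)}(0)$ and $\hat q_{d}^{(j)}(h)=q^{(j)}(h)$ for $j=0,\dots,n-1$. By the standard Hermite interpolation error estimate, $\|(\hat q_{d}-q)^{(k)}\|_{L^{\infty}[0,h]}\le C_{k}h^{2n-k}\|q^{(2n)}\|_{L^{\infty}[0,h]}$, so $\hat q_{d}^{(k)}(\tau)=q^{(k)}(\tau)$ for $0\le k\le n-1$ and $\hat q_{d}^{(n)}(\tau)=q^{(n)}(\tau)+\mathcal{O}(h^{n})$ at $\tau=0,h$; the endpoint value is in general no better than $\mathcal{O}(h^{n})$, as one checks by expanding $\hat q_{d}^{(n)}(0)$ in the rescaled basis used below and invoking that Hermite interpolation reproduces polynomials of degree $\le 2n-1$. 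Recalling from the proof of Lemma~\ref{lem:HOD} that each prolongation of the Euler--Lagrange equation can be written $q^{(k)}=g_{k}(q,\dot q)$ with $g_{k}$ smooth and $g_{2}(q,v)=f(q)$, the matching property of $\hat q_{d}$ shows that $\hat q_{d}$ satisfies the prolongation--collocation conditions \eqref{DLsys} of orders $2,\dots,n-1$ \emph{exactly} at $\tau=0,h$ and the order-$n$ condition with a residual $\hat q_{d}^{(n)}(\tau)-g_{n}(\hat q_{d}(\tau),\dot{\hat q}_{d}(\tau))=\hat q_{d}^{(n)}(\tau)-q^{(n)}(\tau)=\mathcal{O}(h^{n})$.

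Next I would set $w_{d}:=q_{d}-\hat q_{d}$, a polynomial of degree $\le 2n-1$, and write $\epsilon_{0}:=\dot w_{d}(0)=\dot q_{d}(0)-\dot q(0)$, $\epsilon_{1}:=\dot w_{d}(h)$, $\|\epsilon\|:=\max(|\epsilon_{0}|,|\epsilon_{1}|)$. Subtracting the collocation conditions satisfied by $\hat q_{d}$ from those satisfied by $q_{d}$, using $q_{d}(0)=q(0)$, $q_{d}(h)=q(h)$ and the smoothness of the $g_{k}$ (the argument is that of Lemma~\ref{lem:HOD}), one obtains for $\tau=0,h$:
$$w_{d}(\tau)=0,\qquad w_{d}^{(k)}(\tau)=\mathcal{O}(\|\epsilon\|)\ \ (2\le k\le n-1),\qquad w_{d}^{(n)}(\tau)=\mathcal{O}(\|\epsilon\|)+\mathcal{O}(h^{n}),$$
where in fact $w_{d}^{(2)}(\tau)=0$ when $n\ge3$ (then $g_{2}$ has no $v$-dependence and $\hat q_{d}^{(2)}(\tau)=q^{(2)}(\tau)$ exactly), and the $\mathcal{O}(h^{n})$ is precisely the order-$n$ residual of $\hat q_{d}$.

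The heart of the proof is then to extract the leading $h$-behaviour of $w_{d}^{(n)}(0)$ and $w_{d}^{(n)}(h)$. Writing $w_{d}(h\sigma)=\sum_{j=0}^{n-1}\bigl(w_{d}^{(j)}(0)h^{j}\phi_{n,j}(\sigma)+(-1)^{j}w_{d}^{(j)}(h)h^{j}\phi_{n,j}(1-\sigma)\bigr)$ in the rescaled, $h$-independent Hermite basis $\phi_{n,j}(\sigma):=h^{-j}H_{n,j}(h\sigma)$ (so $\phi_{n,j}^{(k)}(0)=\delta_{jk}$ and $\phi_{n,j}^{(k)}(1)=0$ for $0\le k\le n-1$) and differentiating $n$ times, the $j=0$ and $j=2$ contributions vanish (their $w_{d}$-coefficients do), the $j=1$ contribution has magnitude $\Theta(h^{1-n})$ and dominates, and the remaining ones are $\mathcal{O}(h^{3-n}\|\epsilon\|)$; with $A:=\phi_{n,1}^{(n)}(0)$ and $B:=\phi_{n,1}^{(n)}(1)$ this yields
$$w_{d}^{(n)}(0)=h^{1-n}\bigl(A\epsilon_{0}-(-1)^{n}B\epsilon_{1}\bigr)+\mathcal{O}(h^{3-n}\|\epsilon\|),\qquad w_{d}^{(n)}(h)=h^{1-n}\bigl(B\epsilon_{0}-(-1)^{n}A\epsilon_{1}\bigr)+\mathcal{O}(h^{3-n}\|\epsilon\|).$$
Equating with $w_{d}^{(n)}(\tau)=\mathcal{O}(\|\epsilon\|)+\mathcal{O}(h^{n})$ and multiplying by $h^{n-1}$ gives the $2\times2$ system
$$M\begin{pmatrix}\epsilon_{0}\\ \epsilon_{1}\end{pmatrix}=\mathcal{O}(h^{2n-1})+\mathcal{O}(h)\begin{pmatrix}\epsilon_{0}\\ \epsilon_{1}\end{pmatrix}+\mathcal{O}(\|\epsilon\|^{2}),\qquad M=\begin{pmatrix}A & -(-1)^{n}B\\ B & -(-1)^{n}A\end{pmatrix},$$
with $\det M=(-1)^{n}(B^{2}-A^{2})$, the $\mathcal{O}(\|\epsilon\|^{2})$ absorbing the nonlinear dependence of $g_{n}$ on $v$. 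Finally, from \eqref{basis_pol} with $h=1$, the generating function $\sum_{s\ge0}\binom{n+s-1}{s}t^{s}=(1-t)^{-n}$ and the hockey-stick identity $\sum_{s=0}^{n-2}\binom{n+s-1}{s}=\binom{2n-2}{n-2}$, one computes $A=-n!\binom{2n-2}{n-1}$ and $B=(-1)^{n}n!\binom{2n-2}{n-2}$; hence $|A|>|B|$ for every $n\ge2$ (the central binomial coefficient is strictly largest) and $\det M\neq0$. Therefore $M+\mathcal{O}(h)$ is invertible with uniformly bounded inverse for $h$ small, and a routine fixed-point/continuity argument --- which simultaneously yields existence and uniqueness of the relevant branch of $q_{d}$ near $\hat q_{d}$ --- forces $\|\epsilon\|=\mathcal{O}(h^{2n-1})$, i.e.\ $\dot q_{d}(\tau)=\dot q(\tau)+\mathcal{O}(h^{2n-1})$ for $\tau=0,h$.

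I expect the main obstacle to be the bookkeeping in the third step: one must track which Hermite coefficients of $w_{d}$ vanish, which are $\mathcal{O}(\|\epsilon\|)$, and what power of $h$ multiplies each, so that the single term $h^{1-n}(A\epsilon_{0}-(-1)^{n}B\epsilon_{1})$ is isolated as dominant; and one must evaluate $A$ and $B$ (or at least establish $|A|\neq|B|$) to guarantee that the resulting $2\times2$ system is nondegenerate. This nondegeneracy, together with the fact that the order-$n$ collocation residual of $\hat q_{d}$ is of size exactly $h^{n}$ while the relevant Jacobian scales like $h^{1-n}$, is precisely the mechanism that upgrades a modest $\mathcal{O}(h^{n})$ residual into the optimal rate $\mathcal{O}(h^{2n-1})$.
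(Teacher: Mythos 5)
Your argument is correct, but it proceeds quite differently from the paper's proof, and in a way that is arguably more complete. The paper works directly with the second derivative: it writes the two-point Hermite representation (degree $2n-3$, data of orders $0,\dots,n-2$ at the endpoints) for $\ddot q$ with its remainder $R_{n-1}[\ddot q](t)=\frac{q^{(2n)}(\xi)}{(2n-2)!}t^{n-1}(h-t)^{n-1}$, notes that $\ddot q_d$ reproduces its own representation exactly, subtracts, integrates over $[0,h]$, bounds the coefficient differences by Lemma~\ref{lem:HOD}, and extracts $p$ from the self-consistent order relation $\mathcal{O}(h^{p})=\mathcal{O}(h^{p+2})+\mathcal{O}(h^{2n-1})$; this is short, but it compresses the two endpoint velocity errors into the single integrated quantity $[\dot q(h)-\dot q_d(h)]-[\dot q(0)-\dot q_d(0)]$ and settles $p$ by a bootstrap rather than by solving for the errors. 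You instead compare $q_d$ with the Hermite interpolant $\hat q_d$ of the exact endpoint data, observe that $\hat q_d$ satisfies the prolongation--collocation conditions \eqref{DLsys} exactly through order $n-1$ with an order-$n$ residual of size $\mathcal{O}(h^{n})$, and then analyze the two reduced implicit equations for $\dot q_d(0),\dot q_d(h)$ by expanding $w_d=q_d-\hat q_d$ in the $h$-rescaled basis obtained from \eqref{basis_pol}; the explicit values $A=-n!\binom{2n-2}{n-1}$, $B=(-1)^{n}n!\binom{2n-2}{n-2}$ (both check out, via the truncation of $(1-\sigma)^{-n}$ and the hockey-stick identity) give $|A|>|B|$, hence an invertible $2\times2$ leading matrix scaling like $h^{1-n}$, and a standard fixed-point argument converts the $\mathcal{O}(h^{n})$ residual into $\|\epsilon\|=\mathcal{O}(h^{2n-1})$. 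What your route buys is a genuinely two-equation, nondegeneracy-based derivation that also delivers existence and local uniqueness of the collocation polynomial and excludes any cancellation between the endpoint errors, at the cost of the basis-coefficient bookkeeping and the computation of $A,B$; the paper's route is far shorter and stays entirely at the level of the interpolation remainder formula plus Lemma~\ref{lem:HOD}, but takes solvability for granted and argues the exponent only by order matching.
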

\begin{proof}
Let $q(t)\in  \mathcal{C}^{2n}([0,h],Q)$  be the solution of the Euler--Lagrange equation \eqref{ELeq}
with boundary conditions $q(0)$ and $q(h)$. Then, $\ddot{q}(t)$ can be written in the form,
\be
\ddot{q}(t)=P_{2n-3}[\ddot{q}](t)+R_{n-1}[\ddot{q}](t),
\ee
where
\be
\begin{split}
P_{2n-3}[\ddot{q}](t)&=\sum_{j=0}^{n-2}\left(q^{(j+2)}(0)H_{n-1,j}(t)+(-1)^{j}q^{(j+2)}(h)H_{n-1,j}(h-t)\right),\\
R_{n-1}[\ddot{q}](t)&=\frac{q^{2n+2}(\xi)}{(2n-2)!}t^{n-1}(h-t)^{n-1},\quad \xi\in(0,h),
\end{split}
\ee
and $H_{n-1}(t)$ are the basis polynomials defined by \eqref{basis_pol}. See \cite{davis} for the proof of the above formula for sufficiently smooth functions. Note that $\ddot{q}_{d}(t)$ is a polynomial of degree $2n-3$ to which
the same formula can be applied. In the latter case, the remainder term is identically zero. Hence,
\be
\ddot{q}_{d}(t)=\sum_{j=0}^{n-2}\left(q_{d}^{(j+2)}(0)H_{n-1,j}(t)+(-1)^{j}q_{d}^{(j+2)}(h)H_{n-1,j}(h-t)\right).
\ee
Subtracting $\ddot{q}_{d}(t)$ from $\ddot{q}(t)$ gives
\begin{multline}\label{lem:formula1}
\ddot{q}(t)-\ddot{q}_{d}(t)\\=\sum_{j=1}^{n-2}\left((q^{(j+2)}(0)-q_{d}^{(j+2)}(0))H_{n-1,j}(t)+(-1)^{j}(q^{(j+2)}(h)-q_{d}^{(j+2)}(h))H_{n-1,j}(h-t)\right)\\
\quad+R_{n-1}[\ddot{q}](t).
\end{multline}
Next, we integrate the above expression from $0$ to $h$. The left-hand side of the equation becomes
\be
\int_{0}^{h}\left[\ddot{q}(t)-\ddot{q}_{d}(t)\right]dt=\left[\dtq(h)-\dtq_{d}(h)\right]-\left[\dtq(0)-\dtq_{d}(0)\right].
\ee
Further, observe that
\be
\begin{split}
&\int_{0}^{h}H_{n-1,j}(t)dt=\int_{0}^{h}H_{n-1,j}(h-t)dt=C_{j}h^{2},\quad j=1,\ldots n-2\\&
\int_{0}^{h}t^{n-1}(h-t)^{n-1}=Ch^{2n-1},
\end{split}
\ee
where $C_{j}, C>0$ are constants which do not depend on $h$. Now, let $\dtq_{d}(0)=\dtq(0)+\mathcal{O}(h^p)$ and
$\dtq_{d}(h)=\dtq(h)+\mathcal{O}(h^p)$, where we wish to determine the value of $p$. It is easy to see that after integrating both
sides of \eqref{lem:formula1} and rewriting it in terms of the order conditions we arrive at
$$
\mathcal{O}(h^p)=\mathcal{O}(h^{p+2})+\mathcal{O}(h^{2n-1}).
$$
Note that we used Lemma~\ref{lem:HOD} to estimate the higher-order derivatives in \eqref{lem:formula1}.
It follows immediately that $p=2n-1$ and the proof is finished.
\end{proof}

We are now ready to prove the main result of this section.
\begin{theorem}
Assume that a Lagrangian function $L:TQ\rightarrow \RR$ is sufficiently smooth and its partial derivatives
are uniformly bounded. Then, the discrete Lagrangian $L_{d}(q_{0},q_{1},h)$ constructed according to \eqref{DLEM}, \eqref{DLsys} with
$q_{0}=q(0)$ and $q_{1}=q(h)$, approximates the exact
discrete Lagrangian $L_{d}^{E}(q(0),q(h),h)$ with order $2\lfloor n/2\rfloor+3$, for $n\geq 3$. In particular,
when  $n$ is even, $\lfloor n/2\rfloor=n/2$, and the order is $n+3$. For odd $n$, $\lfloor n/2\rfloor=(n-1)/2$, and
the order is $n+2$. For $n=2$, the order is equal to $3$.
\end{theorem}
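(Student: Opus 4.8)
The plan is to split the discrepancy $L_d(q(0),q(h),h)-L_d^E(q(0),q(h),h)$ into a \emph{quadrature error} and an \emph{interpolation error}, estimate each separately, and combine. Let $q(t)$ be the exact Euler--Lagrange solution with $q(0)=q_0$ and $q(h)=q_1$, set $f(t)=L(q_d(t),\dot q_d(t))$, and let $K(\cdot)$ denote the Euler--Maclaurin rule \eqref{EMQ} with $m=\lfloor n/2\rfloor$ correction terms, so that $L_d(q_0,q_1,h)=K(f)$. Then
\[
L_d-L_d^E=\Bigl(K(f)-\int_0^h f(t)\,dt\Bigr)+\int_0^h\bigl(L(q_d(t),\dot q_d(t))-L(q(t),\dot q(t))\bigr)\,dt,
\]
and I expect the first (quadrature) term to govern for $n\geq 3$.

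For the interpolation term I would use that $q(t)$ extremizes the action functional among curves with the prescribed endpoints. Since $q_d(0)=q(0)$ and $q_d(h)=q(h)$, the error $e:=q_d-q$ vanishes at $0$ and $h$; Taylor expanding $L$ about $(q,\dot q)$, integrating the $\dot e$-term by parts, and using the Euler--Lagrange equation kills the first-order contribution, leaving a term of size $\mathcal{O}\bigl(\int_0^h(|e|^2+|\dot e|^2)\,dt\bigr)$. Lemma~\ref{lem:FOD} together with the classical error estimates for two-point Hermite interpolation \cite{davis} give $\|e\|_\infty=\mathcal{O}(h^{2n})$ and $\|\dot e\|_\infty=\mathcal{O}(h^{2n-1})$ on $[0,h]$, so this term is $\mathcal{O}(h^{4n-1})$ and negligible once $n\geq 3$. (For $n=2$ the prolongation contributes no new collocation conditions and Lemma~\ref{lem:HOD} is vacuous; there one falls back on the cruder bound $\mathcal{O}(h\,\|\dot e\|_\infty)=\mathcal{O}(h^{2n})=\mathcal{O}(h^4)$ for the interpolation term, which then governs and yields order $3$.)

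For the quadrature term I would apply Theorem~\ref{EMF} with $N=1$: the remainder is a fixed multiple of $h^{2\lfloor n/2\rfloor+3}$ times the $(2\lfloor n/2\rfloor+2)$-nd $t$-derivative of $L(q_d(t),\dot q_d(t))$ evaluated at some interior $\xi\in(0,h)$, and the crux of the proof is to bound this derivative uniformly in $h$; the claimed order then follows. This is not immediate, because the coefficients of $q_d$ in a fixed monomial basis carry negative powers of $h$. I would handle it by comparing $q_d$ with the exact two-point Hermite interpolant $\tilde q_d$ of $q$ of degree $2n-1$: on one hand $\|\tilde q_d^{(r)}-q^{(r)}\|_\infty=\mathcal{O}(h^{2n-r})$ by \cite{davis}; on the other hand $q_d-\tilde q_d$ is, by \eqref{HermitePol}, the Hermite polynomial assembled from the differences of the nodal derivative data, which by Lemmas~\ref{lem:HOD} and~\ref{lem:FOD} vanish in order $0$ and are $\mathcal{O}(h^{2n-1})$ in orders $1,\dots,n-1$. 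Rescaling $t=h\tau$ in \eqref{basis_pol} gives $\|H_{n,j}^{(r)}\|_\infty=\mathcal{O}(h^{j-r})$ on $[0,h]$, whence $\|q_d^{(r)}-q^{(r)}\|_\infty=\mathcal{O}(h^{2n-r})$ and, in particular, $q_d^{(r)}$ is bounded uniformly in $h$ for all $r\leq 2n$. Since the $(2\lfloor n/2\rfloor+2)$-nd derivative of $L(q_d,\dot q_d)$ only involves the derivatives $q_d^{(r)}$ with $r\leq 2\lfloor n/2\rfloor+3\leq n+3\leq 2n$ (here $n\geq 3$ enters) and the partials of $L$ are uniformly bounded by hypothesis, the bound follows from the chain and product rules.

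The step I expect to be the main obstacle is exactly this uniform-in-$h$ control of the high-order derivatives of the composite $L(q_d(t),\dot q_d(t))$. It all rests on tracking how the $\mathcal{O}(h^{2n-1})$ perturbation of the nodal velocity data $\dot q_d(0),\dot q_d(h)$ propagates: first through the prolongation-collocation conditions \eqref{DLsys} to all of $q_d^{(j)}(0),q_d^{(j)}(h)$ (Lemma~\ref{lem:HOD}, whose base case is Lemma~\ref{lem:FOD}), and then through the Hermite representation to $q_d^{(r)}(t)$ on the whole interval. The delicate part is the bookkeeping of powers of $h$, since the favorable exponents arise from a cancellation against the blow-up of the derivatives of the Hermite basis functions. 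Once that is in place, the Euler--Maclaurin remainder bound and the extremality computation for the interpolation term are routine.
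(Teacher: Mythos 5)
Your argument is correct, and it reaches the stated estimate for $n\geq 3$ by a genuinely different decomposition than the paper's. The paper never splits the error into ``quadrature along $q_d$ plus action difference.'' Instead it works entirely with endpoint data: using Lemmas~\ref{lem:FOD} and~\ref{lem:HOD} it replaces $L(q_d,\dot q_d)$ and $\frac{d^{2l-1}}{dt^{2l-1}}L(q_d,\dot q_d)$ at $t=0,h$ by the corresponding quantities along the exact solution, at cost $\mathcal{O}(h^{2n})$ and $\mathcal{O}(h^{2l+2n-1})$ respectively, and then invokes the Euler--Maclaurin error $\mathcal{O}(h^{2\lfloor n/2\rfloor+3})$ for the \emph{exact} integrand $L(q(t),\dot q(t))$, whose derivatives are trivially bounded uniformly in $h$. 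This sidesteps precisely the step you identify as the main obstacle: the paper never needs interior, uniform-in-$h$ bounds on $q_d^{(r)}$, nor the second-variation/extremality computation. Your route pays for that with the comparison against the exact Hermite interpolant and the $h^{j-r}$ scaling of the basis functions \eqref{basis_pol} (which is carried out correctly and yields the useful by-product $\|q_d^{(r)}-q^{(r)}\|_\infty=\mathcal{O}(h^{2n-r})$ on all of $[0,h]$), and in exchange it shows that the ``interpolation'' contribution is of the much higher order $\mathcal{O}(h^{4n-1})$, making transparent that the quadrature remainder alone dictates the rate --- a point the paper only asserts in its remarks. One small divergence: your treatment of $n=2$ is unnecessarily pessimistic and slightly inconsistent in its use of ``order'' (error exponent versus the convention of \eqref{order_def}); the extremality argument and Lemma~\ref{lem:FOD} still apply when $n=2$, so your own machinery would give a bound better than the paper's claimed $\mathcal{O}(h^3)$, the discrepancy arising because the paper's $n=2$ analysis silently drops the Euler--Maclaurin correction term and uses the bare trapezoidal rule. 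This does not affect the main claim for $n\geq 3$.
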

\begin{proof}
Observe that differentiability of the Lagrangian function $L$ implies that it is Lipschitz continuous in each of its arguments,
given that the partial derivatives are uniformly bounded. We will make use of both, differentiability and Lipschitz continuity
of $L$, in the proof below.

We start with the simplest nontrivial case of $n=2$, which corresponds to the space of piecewise cubic polynomials in the
Galerkin construction of a variational integrator. Note that it is sufficient to apply collocation to the Euler--Lagrange equation at the end-points
of the interval $[0,h]$ to uniquely define $q_{d}(t)$ satisfying the given boundary conditions. We obtain the order of approximation
of the first derivative by applying Lemma~\ref{lem:FOD}.
We use the simple trapezoidal rule
$$
\int_{a}^{b}f(t)dt=\frac{b-a}{2}(f(a)+f(b))+\mathcal{O}((b-a)^3).
$$
to discretize the action integral in \eqref{DL}. Since $L(q,\dtq)$ is Lipschitz continuous,
\ben
\label{lagr_est}
L(q_{d}(\tau),\dtq_{d}(\tau))=L(q(\tau),\dtq(\tau))+\mathcal{O}(h^{3}).
\een
Therefore,
\be
\begin{split}
L_{d}(q(0),q(h),h)&=\frac{h}{2}\left(L(q_{d}(0),\dtq_{d}(0))+L(q_{d}(h),\dtq_{d}(h))\right)\\&
=\frac{h}{2}\left(L(q(0),\dtq(0))+L(q(h),\dtq(h))\right)+\mathcal{O}(h^4)\\&
=L_{d}^{E}(q(0),q(h))+\mathcal{O}(h^3)+\mathcal{O}(h^4)\\&
=L_{d}^{E}(q(0),q(h))+\mathcal{O}(h^3).
\end{split}
\ee
The combination of the trapezoidal rule and the cubic Hermite interpolation makes the analysis of the approximation of the exact discrete Lagrangian by the discrete Lagrangian elementary. As we can see, the error
in approximation is determined by the error of the quadrature rule. This is due to the fact that the derivatives
are approximated to sufficiently high order. The pattern persists for higher-order Hermite interpolants
due to our choice of quadrature method.

It is straightforward to extend the above reasoning to the general case. Iteratively
applying Lemma~\ref{lem:HOD} and Lemma~\ref{lem:FOD} to the derivatives of $L_{d}(q_{d}(t),\dtq_{d}(t))$ with respect to time $t$ gives
the relation
\be
\frac{d^{2l-1}}{dt^{2l-1}}L(q_{d}(t),\dtq_{d}(t))\bigg|_{t=0,h}=\frac{d^{2l-1}}{dt^{2l-1}}L(q(t),\dtq(t))\bigg|_{t=0,h}+\mathcal{O}(h^{2n-1}),
\ee
which is analogous to \eqref{lagr_est}. Hence,
\be
\begin{split}
L_{d}(q(0),q(h),h)&=\frac{h}{2}(L(q_{d}(0),\dtq_{d}(0))+L(q_{d}(h),\dtq_{d}(h)))\\&
\quad-\sum_{l=1}^{\lfloor n/2\rfloor}\frac{B_{2l}}{(2l)!}h^{2l}\left(\frac{d^{2l-1}}{dt^{2l-1}}L(q_{d}(t),\dtq_{d}(t))\bigg|_{t=h}-
\frac{d^{2l-1}}{dt^{2l-1}}L(q_{d}(t),\dtq_{d}(t))\bigg|_{t=0}\right)\\&
=\frac{h}{2}(L((0),\dtq(0))+L(q(h),\dtq(h)))+\mathcal{O}(h^{2n})\\&
\quad-\sum_{l=1}^{\lfloor n/2\rfloor}\frac{B_{2l}}{(2l)!}h^{2l}\left(\frac{d^{2l-1}}{dt^{2l-1}}L(q(t),\dtq(t))\bigg|_{t=h}-
\frac{d^{2l-1}}{dt^{2l-1}}L(q(t),\dtq(t))\bigg|_{t=0}+\mathcal{O}(h^{2n-1})\right)\\&
=L_{d}^{E}(q(0),q(h),h)+\mathcal{O}(h^{2\lfloor n/2\rfloor+3})+\mbox{h.o.t.}
=L_{d}^{E}(q(0),q(h),h)+\mathcal{O}(h^{2\lfloor n/2\rfloor+3}).
\end{split}
\ee
\end{proof}

\textbf{Remarks.} Several remarks are in order. As we already noted,
the error in approximation of the exact discrete Lagrangian by the Prolongation-Collocation discrete Lagrangian is determined by the
order of the quadrature formula. In particular, the best order estimate is achieved if \emph{all} the collocation conditions \eqref{DLsys}
are used. The collocation equations enter the terms under the summation in \eqref{DLEM}, which in turn determine to the order of
accuracy of the quadrature formula.

Secondly, if we write $n=2k$, so that the degree of the interpolating polynomial is $d=2n-1=4k-1$, the choices
$n=2k$ and $n=2k+1$ lead to the same maximal order of the quadrature, $2k+3$. Therefore, it is preferable to use Hermite interpolating
polynomials of order $d=4k-1$, which minimizes the computational effort for a discrete Lagrangian for a given order of accuracy.
\begin{figure}[t!]
\begin{center}
\includegraphics[width=0.5\textwidth]{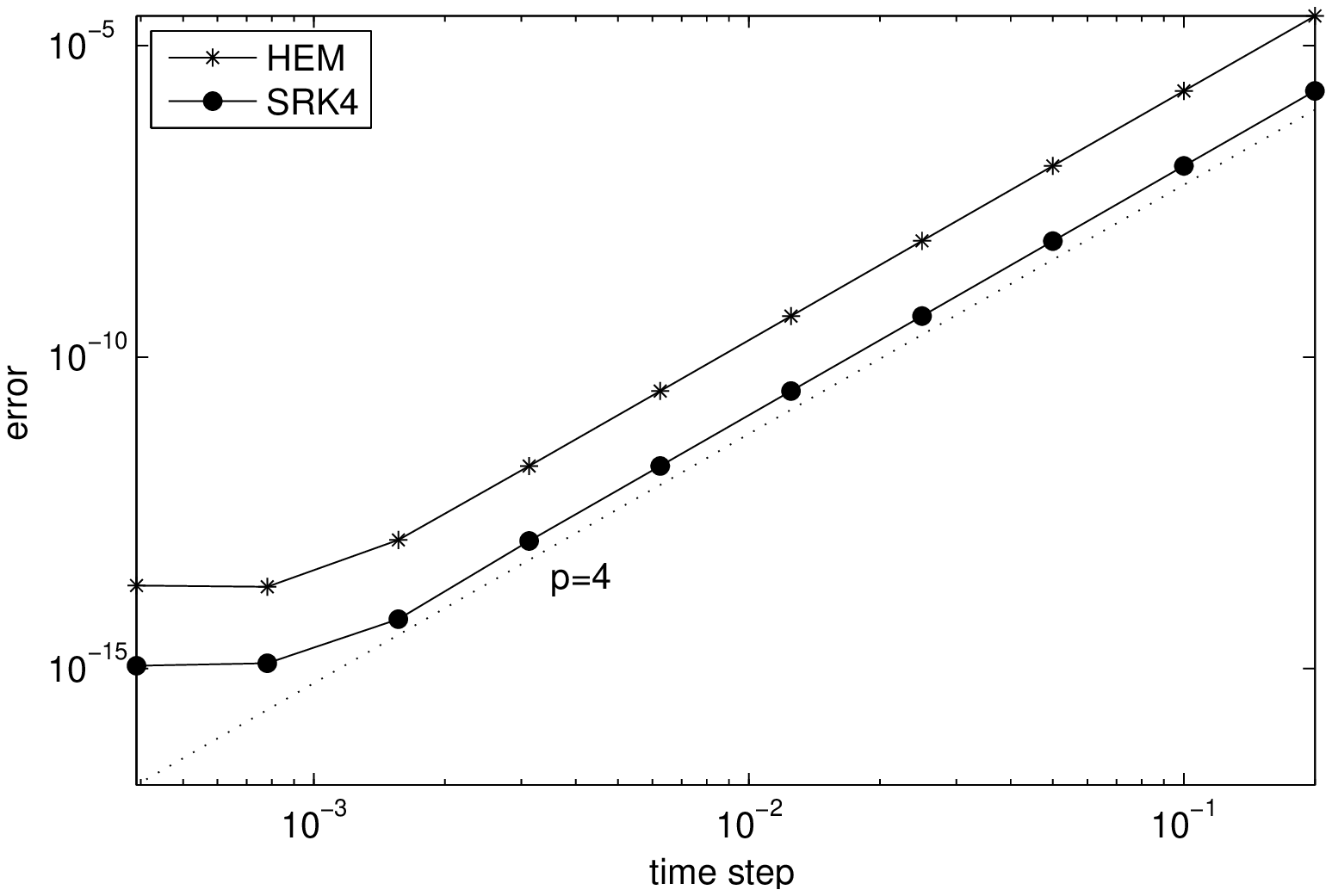}\includegraphics[width=0.5\textwidth]{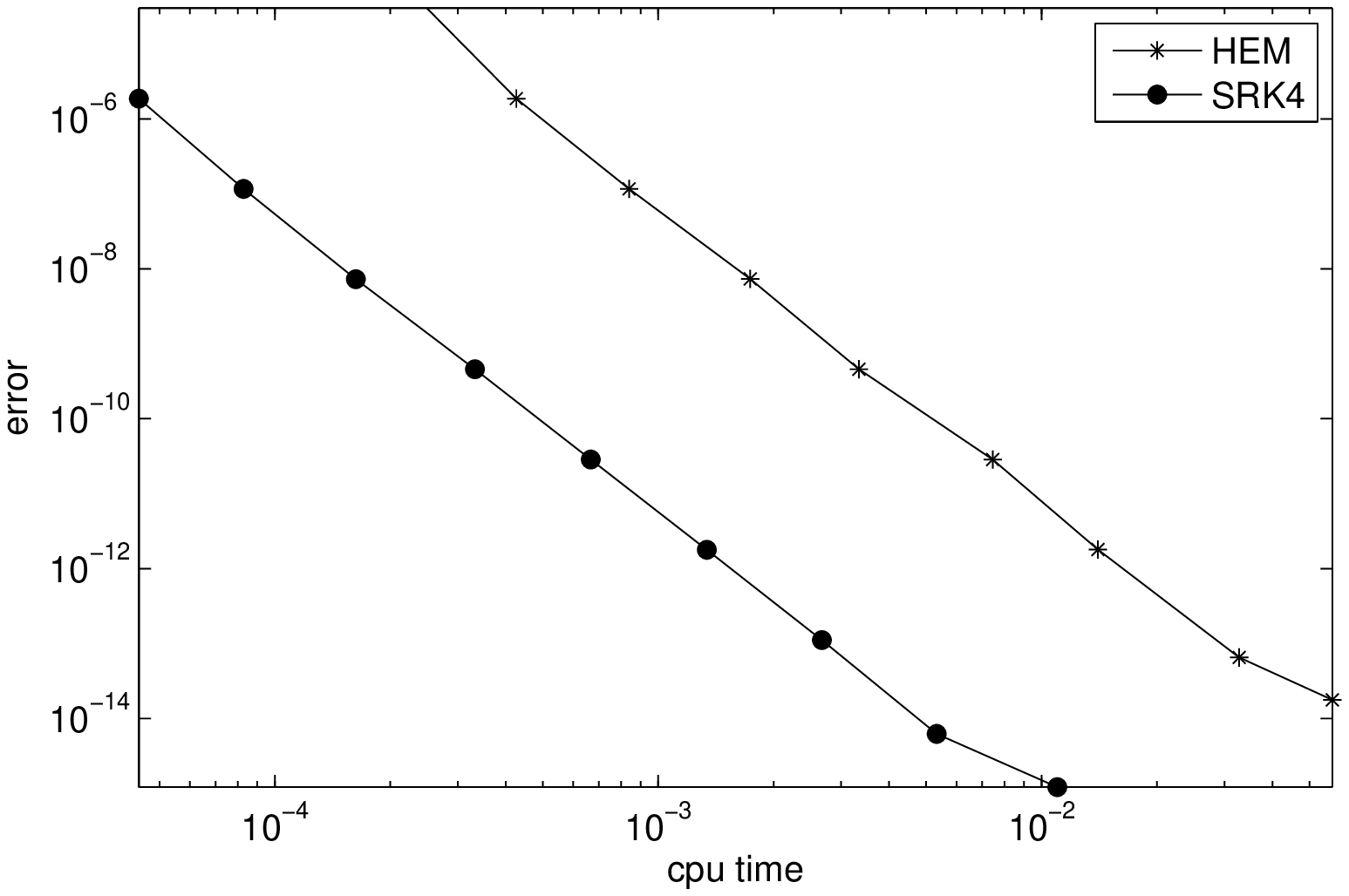}
\caption{Simple harmonic oscillator. Plots of: (a) the global errors for HEM and SRK4, (b) machine time versus the accuracy.
The dotted line is the reference line for the exact order.}
\label{fig:simpen_error&cputime}
\end{center}
\end{figure}

\begin{figure}[t!]
\begin{center}
\includegraphics[width=0.5\textwidth]{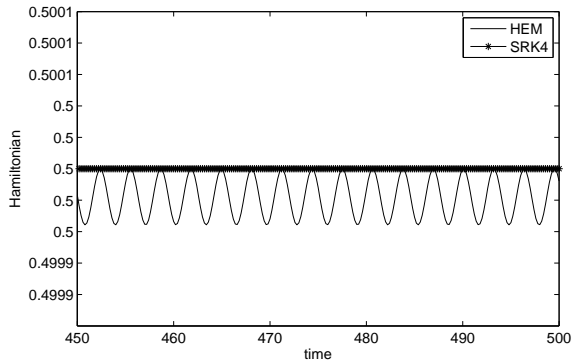}
\caption{Simple harmonic oscillator. Energy error for the $4$th order HEM and the two-stage symplectic RK4, step-size $h=0.2$.}
\label{fig:simpen_energy}
\end{center}
\end{figure}
\begin{figure}[t!]
\begin{center}
\includegraphics[width=0.5\textwidth]{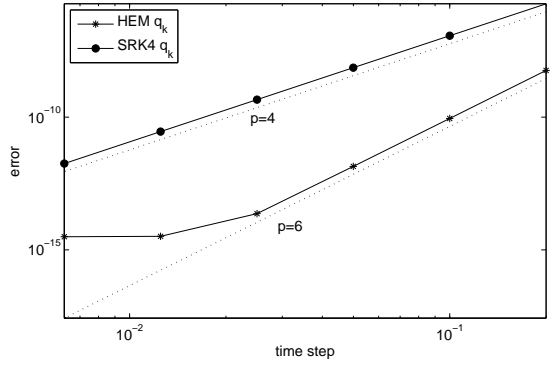}
\caption{Simple harmonic oscillator. The global approximation error of only the position trajectory.}
\label{fig:simpen_pos}
\end{center}
\end{figure}


\section{Examples}\label{section_examples}
\subsection{Simple Harmonic Oscillator}
We consider a harmonic oscillator system described by the equations
$$
\dtq=p,\qquad \dot{p}=-q.
$$
The total energy of the system is given by the Hamiltonian $H(q,p)=\frac{1}{2}p^2+\frac{1}{2}q^2$. To test our method numerically,
we used the $4$th order variational integrator (HEM) constructed by means of the quintic Hermite
interpolating polynomial and the Euler--Maclaurin quadrature formula. In the plots given in
Figure~\ref{fig:simpen_error&cputime}, Figure~\ref{fig:simpen_energy}, the resulting $4$th order method is compared to the
two-stage symplectic Runge--Kutta method of order 4.

We would like to note the following interesting fact. Let us consider only the position component $q_{k}$ of the
symplectic integrator $(q_{k},p_{k})\mapsto (q_{k+1},p_{k+1})$ and compute the order of the corresponding one-step method $q_{k}\mapsto q_{k+1}$.
It turns out that in the case of the 4th order HEM method applied to the simple harmonic oscillator, the global error in the position component is of order $6$.
The error plots are given in Figure~\ref{fig:simpen_pos}, where we can see that for the symplectic Runge-Kutta method the error
remains to be of the $4$th order. This does not however contradict the variational order analysis discussed in Section~\ref{section_VOCal}. The theorem mentioned
therein establishes the order of the integrator $(q_{k},p_{k})\mapsto (q_{k+1},p_{k+1})$ in position-momentum variables, but allows the integrator $(q_{k-1},q_k)\mapsto (q_k,q_{k+1})$ in position variables to have the same or higher order.

\subsection{Planar Pendulum}
A planar pendulum of mass $m=1$ with the massless rod of length $l=1$ is a Hamiltonian system for which the equations of motion
are
$$
\dtq=p,\qquad \dot{p}=-\sin q.
$$
In Figure~\ref{fig:genpen_error&cputime}, we compare the performance of the method proposed in Section~\ref{section_method} with the two-stage
symplectic Runge-Kutta method of the same order. HEM method encounters a slightly larger error in energy, but importantly it stays bounded
for large time-intervals.
\begin{figure}[t]
\begin{center}
\includegraphics[width=0.5\textwidth]{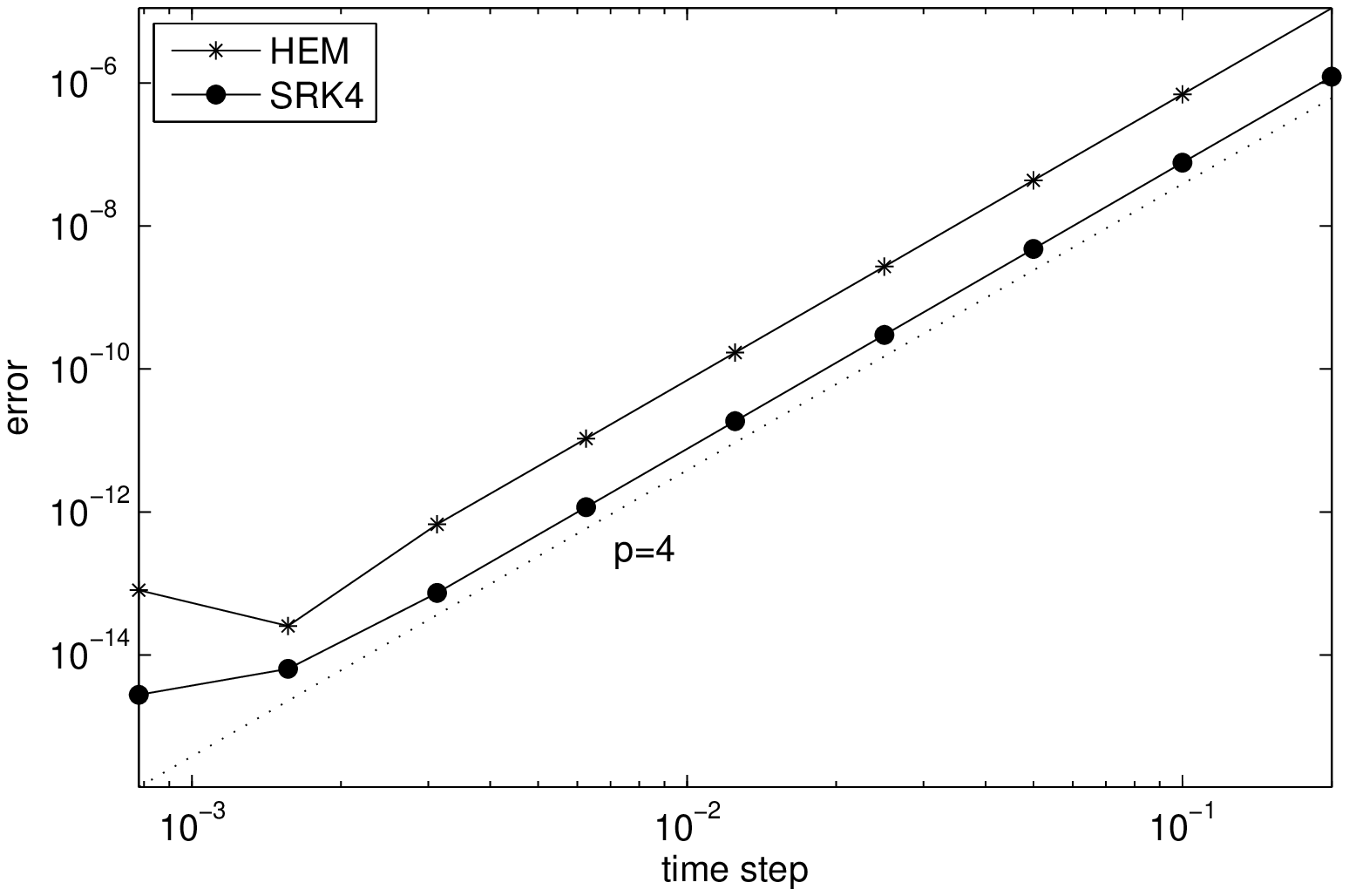}\includegraphics[width=0.5\textwidth]{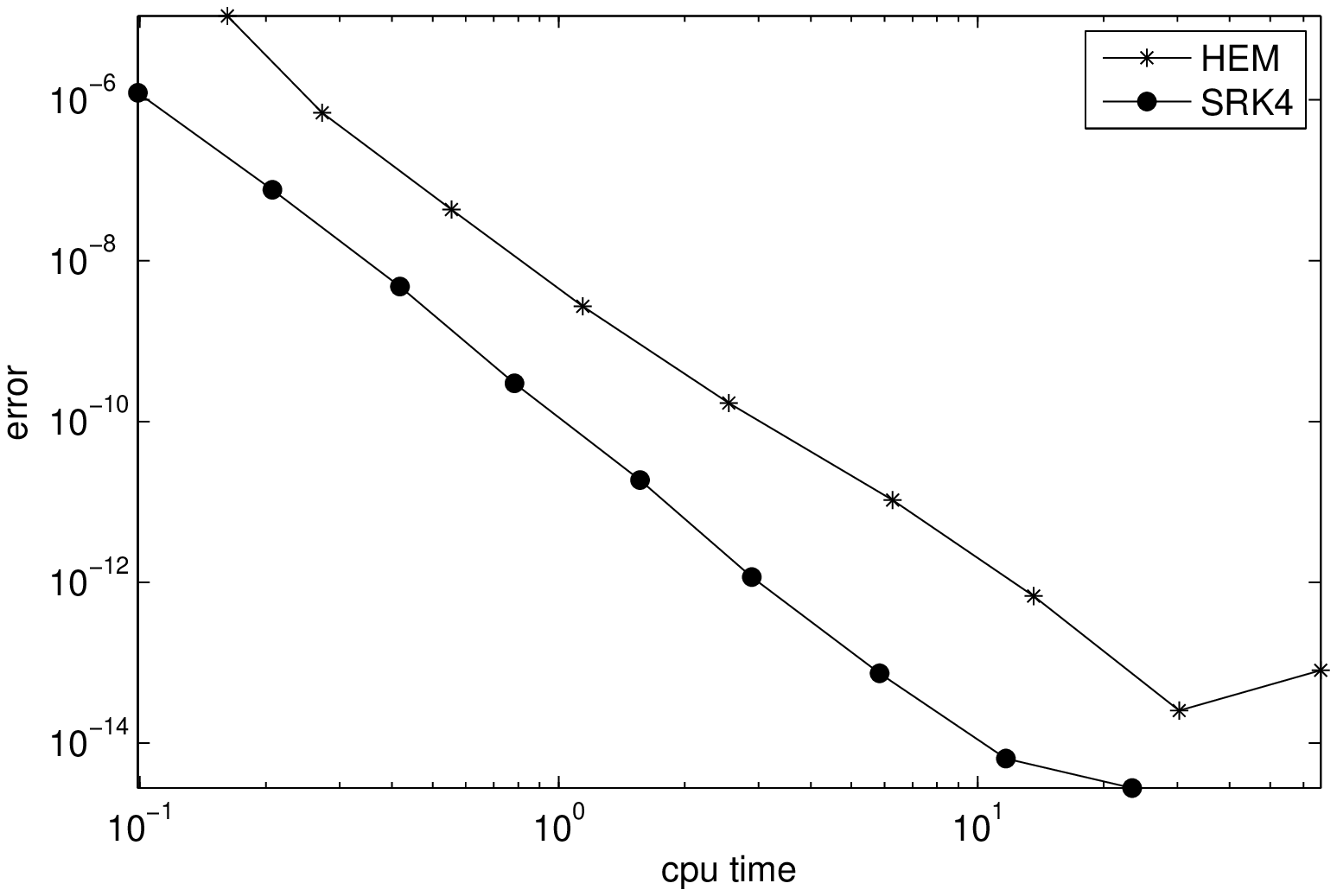}
\caption{Planar pendulum. Plots of: (a) the global errors for HEM and SRK4, (b) machine time versus the accuracy.
The dotted line is the reference line for the exact order.}
\label{fig:genpen_error&cputime}
\end{center}
\end{figure}

\begin{figure}[t]
\begin{center}
\includegraphics[width=0.5\textwidth]{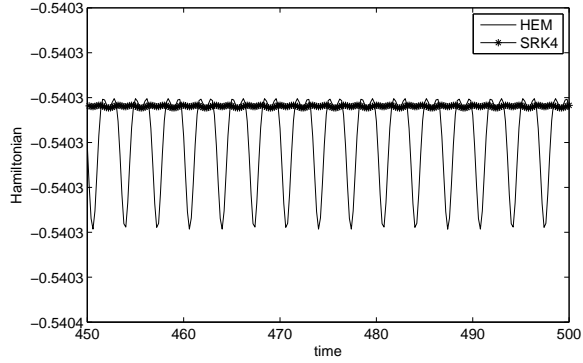}
\caption{Pendulum. Energy error for the $4$th order HEM and the two-stage symplectic RK4, step-size $h=0.2$.}
\label{fig:genpen_energy}
\end{center}
\end{figure}

\subsection{Duffing Oscillator}
The unforced undamped Duffing oscillator is a Hamiltonian system of equations
$$
\dtq=p,\qquad \dot{p}=q-q^3,
$$
where the Hamiltonian function is
$$
H(q,p)=\frac{1}{2}p^2-\frac{1}{2}q^2+\frac{1}{4}q^4.
$$
The plots in Figure~\ref{fig:duffing_error&cputime} show the comparison of the $2$nd order HEM variational integrator and the well-known
Midpoint rule. The Midpoint rule is an implicit integrator and HEM is semi-implicit meaning that $q_{k+1}$ satisfies an implicit equation
whereas $p_{k+1}$ is computed explicitly. The plots demonstrate the superiority of the HEM over the Midpoint rule in terms of the computational time.
The energy plots for both methods are given in Figure~\ref{fig:duffing_energy}.
\begin{figure}[t]
\begin{center}
\includegraphics[width=0.5\textwidth]{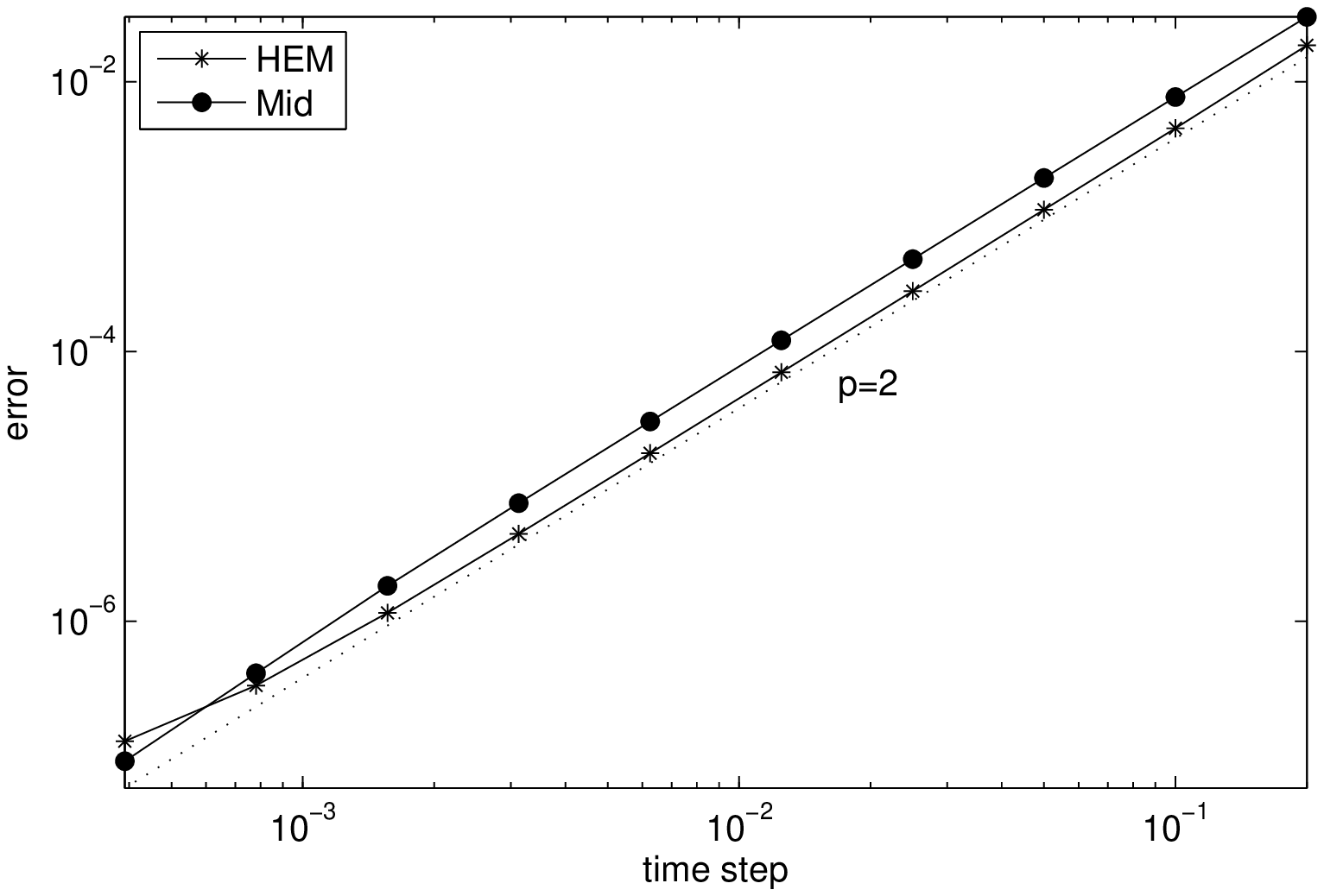}\includegraphics[width=0.5\textwidth]{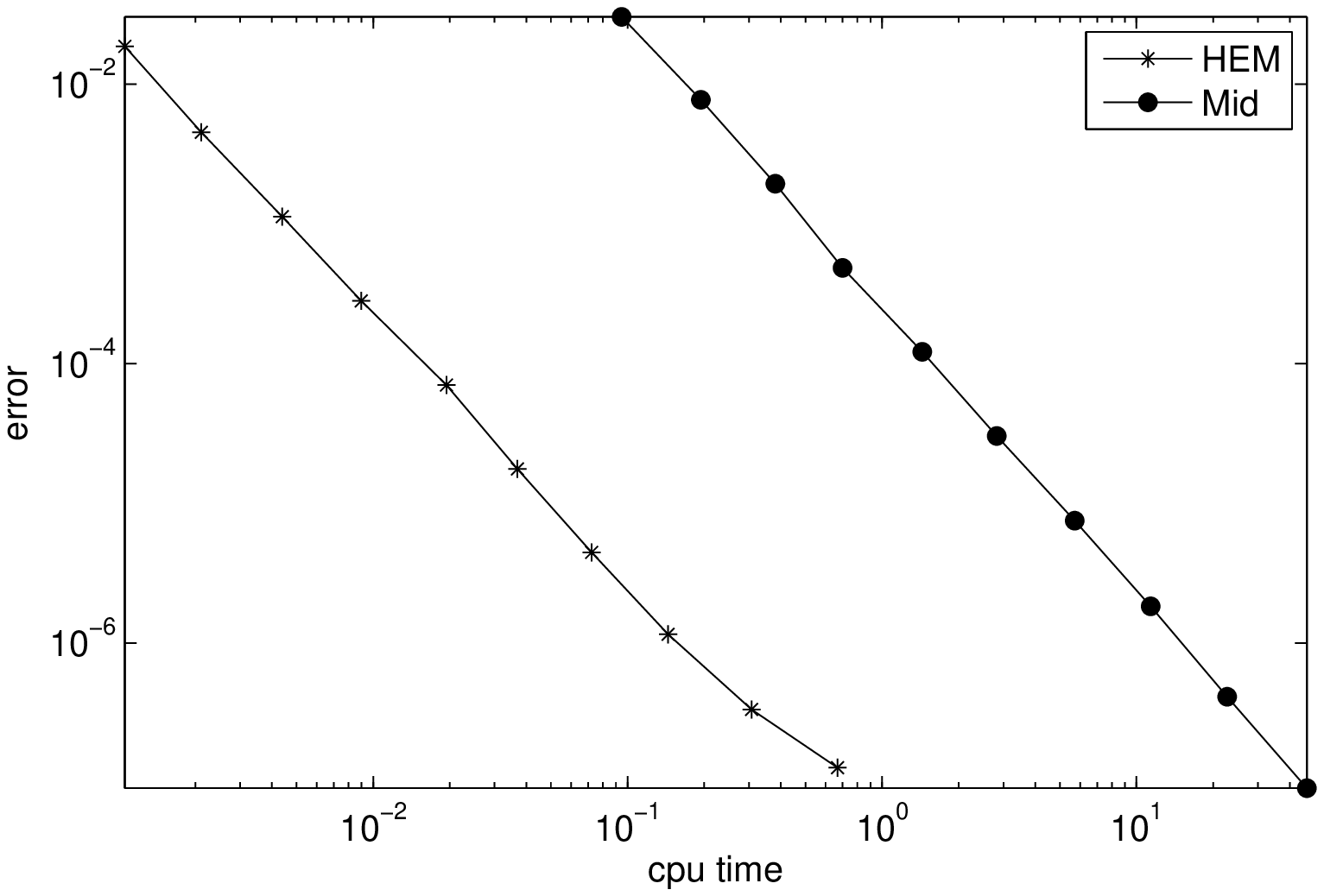}
\caption{Duffing oscillator. Plots of: (a) the global errors for HEM and the Midpoint rule, (b) machine time versus the accuracy.
The dotted line is the reference line for the exact order.}
\label{fig:duffing_error&cputime}
\end{center}
\end{figure}
\begin{figure}[t]
\begin{center}
\includegraphics[width=0.5\textwidth]{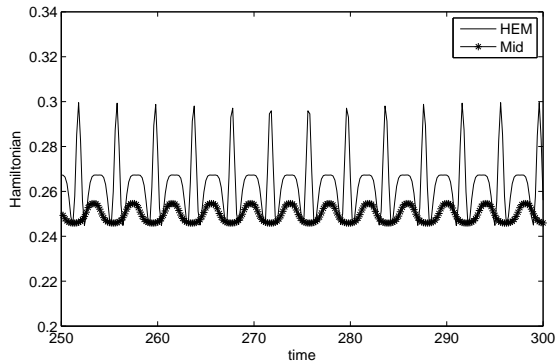}
\caption{Duffing oscillator. Energy error for the $2$nd order HEM and the Midpoint rule, step-size $h=0.2$.}
\label{fig:duffing_energy}
\end{center}
\end{figure}

\section{Conclusions and Future Directions}\label{sec:conclusions}
In this paper, we introduced a novel technique for constructing high-order variational integrators using collocation on the prolongation of the Euler--Lagrange equations. This relies on obtaining prolongation-collocation discrete curves with good approximation properties for the higher-derivatives. The resulting methods are particularly appropriate in combination with digital feedback control, since they naturally yield position and its derivatives as the output, without the need to use interpolation in order to access such data. This also naturally leads to numerical trajectories with better regularity properties across the time nodes.

It would be desirable to explore the connection between the methods proposed in this paper, and variational integrators based on global approximation techniques like splines, and to extend this work to the setting of Lie groups by incorporating techniques from Lie group variational integrators \cite{thesisLeok}, and constructive approximation techniques on Lie groups \cite{OS,S}. Furthermore, it would be interesting to extend the prolongation-collocation techniques to Hamiltonian variational integrators~\cite{LeZh2009} and Hamilton--Pontryagin variational integrators~\cite{LeOh2008} by considering prolongations of Hamilton's equations, and the implicit Euler--Lagrange equations.

\section*{Acknowledgements}
This research was partially supported by NSF grant DMS-1001521, and NSF CAREER Award DMS-1010687.
\bibliographystyle{plainnat}
\bibliography{referencesTS}

\begin{thebibliography}{16}
\providecommand{\natexlab}[1]{#1}
\providecommand{\url}[1]{\texttt{#1}}
\expandafter\ifx\csname urlstyle\endcsname\relax
  \providecommand{\doi}[1]{doi: #1}\else
  \providecommand{\doi}{doi: \begingroup \urlstyle{rm}\Url}\fi

\bibitem[Abramowitz and {I. A. Stegun (Eds)}(1972)]{abramowitz}
M.~Abramowitz and {I. A. Stegun (Eds)}.
\newblock \emph{Handbook of Mathematical Functions with Formulas, Graphs, and
  Mathematical Tables, 9th printing}.
\newblock New York, Dover, 1972.

\bibitem[Costabile and Napoli(2007)]{costabile}
F.~Costabile and A.~Napoli.
\newblock Solving {BVPs} using two-point {Taylor} formula by a symbolic
  software.
\newblock \emph{J. Comp. Appl. Math.}, 210:\penalty0 136--148, 2007.

\bibitem[Dal~Maso(1993)]{Da1993}
Gianni Dal~Maso.
\newblock \emph{An introduction to {$\Gamma$}-convergence}.
\newblock Progress in Nonlinear Differential Equations and their Applications,
  8. Birkh\"auser Boston Inc., Boston, MA, 1993.

\bibitem[Davis(1963)]{davis}
{P. J.} Davis.
\newblock \emph{Interpolation and Approximation}.
\newblock Blaisdell, New York, 1963.

\bibitem[Hairer et~al.(1993)Hairer, N{\o}rsett, and Wanner]{HaNoWa1993}
E.~Hairer, S.~P. N{\o}rsett, and G.~Wanner.
\newblock \emph{Solving ordinary differential equations. {I}}, volume~8 of
  \emph{Springer Series in Computational Mathematics}.
\newblock Springer-Verlag, Berlin, second edition, 1993.
\newblock Nonstiff problems.

\bibitem[Hairer et~al.(2006)Hairer, Lubich, and Wanner]{hairerbook}
E.~Hairer, C.~Lubich, and G.~Wanner.
\newblock \emph{Geometric Numerical Integration: Structure--Preserving
  Algorithms for Ordinary Differential Equations}.
\newblock Springer-Verlag, 2006.

\bibitem[Iserles(2009)]{iserlesbook}
A.~Iserles.
\newblock \emph{A First Course in the Numerical Analysis of Differential
  Equations}.
\newblock Cambridge University Press, 2009.

\bibitem[Leok(2004)]{thesisLeok}
M.~Leok.
\newblock \emph{Foundations of Computational Geometric Mechanics}.
\newblock PhD thesis, California Institute of Technology, 2004.

\bibitem[Leok and Ohsawa(2008)]{LeOh2008}
M.~Leok and T.~Ohsawa.
\newblock Variational discrete {D}irac mechanics--implicit discrete
  {L}agrangian and {H}amiltonian systems.
\newblock \emph{Foundations of Computational Mathematics}, 2008.
\newblock (submitted, {\tt arXiv:0810.0740 [math.SG]}).

\bibitem[Leok and Zhang(2010)]{LeZh2009}
M.~Leok and J.~Zhang.
\newblock Discrete {H}amiltonian variational integrators.
\newblock \emph{IMA Journal of Numerical Analysis}, 2010.
\newblock (accepted, {\tt arXiv:1001.1408 [math.NA]}).

\bibitem[Marsden and West(2001)]{marsden&west}
{J. E.} Marsden and M.~West.
\newblock Discrete mechanics and variational integrators.
\newblock \emph{Acta Numerica}, 10:\penalty0 357--514, 2001.

\bibitem[Olver(1993)]{Ol1993}
Peter~J. Olver.
\newblock \emph{Applications of {L}ie groups to differential equations}, volume
  107 of \emph{Graduate Texts in Mathematics}.
\newblock Springer-Verlag, New York, second edition, 1993.

\bibitem[Oswald and Shingel(2009)]{OS}
P.~Oswald and T.~Shingel.
\newblock Splitting methods for $\mathrm{SU}(\mathrm{N})$ loop approximation.
\newblock \emph{J. Approx. Th.}, 161(1):\penalty0 174--186, 2009.

\bibitem[Patrick and Cuell(2009)]{PaCu2009}
George~W. Patrick and Charles Cuell.
\newblock Error analysis of variational integrators of unconstrained
  {L}agrangian systems.
\newblock \emph{Numer. Math.}, 113\penalty0 (2):\penalty0 243--264, 2009.

\bibitem[Rall(1981)]{rall1981}
Louis~B. Rall.
\newblock \emph{Automatic Differentiation: Techniques and Applications}, volume
  120 of \emph{Lecture Notes in Computer Science}.
\newblock Springer, 1981.

\bibitem[Shingel(2010)]{S}
T.~Shingel.
\newblock Trigonometric approximation of $\mathrm{SO}(\mathrm{N})$ loops.
\newblock \emph{Constr. Approx.}, 32(3):\penalty0 597--618, 2010.

\end{thebibliography}

\end{document}